
\documentclass[reqno,12pt]{amsart}
\usepackage[latin1]{inputenc}
\usepackage{enumerate}
\usepackage{amstext,amsmath,amsthm,amsfonts,amssymb,amscd}
\usepackage{latexsym,mathrsfs,euscript}
\usepackage{mathtools}
\usepackage{esint} 
\usepackage{fullpage}
\usepackage{subfig}
\usepackage{hyperref}
\hypersetup{
    colorlinks,%
    citecolor=blue,%
    filecolor=black,%
    linkcolor=red,%
    urlcolor=blue
}

\usepackage{xcolor}
\usepackage{graphicx}

\newtheorem{theorem}{Theorem}

\newtheorem{lemma}[theorem]{Lemma}
\newtheorem{corollary}[theorem]{Corollary}
\theoremstyle{definition}

\theoremstyle{remark}

\numberwithin{equation}{section}
\allowdisplaybreaks
%

%

\newcommand{\abs}[1]{\left\vert#1\right\vert}
\newcommand{\set}[1]{\left\{#1\right\}}
\newcommand{\proin}[2]{\left<#1,#2\right>}
\newcommand{\norm}[1]{\left\Vert#1\right\Vert}

\newcommand{\R}{\mathbb{R}}

\newcommand{\D}{\mathscr{D}}
\newcommand{\Ha}{\mathscr{H}}
\newcommand{\F}{\mathscr{F}}

\newcommand{\Ds}{D^\sigma}
\newcommand{\ep}{\varepsilon}


%

%
\begin{document}
\title{Nonlocal Schr{\"{o}}dinger equations in metric measure spaces}
\author[M. Actis]{Marcelo Actis}
%
\author[H. Aimar]{Hugo Aimar}

\author[B. Bongioanni]{Bruno Bongioanni}

\author[I. G\'omez]{Ivana G\'{o}mez}

%
%

\subjclass[2010]{Primary 35Q41, 46E35}

\keywords{nonlocal Schr\"{o}dinger equation, Besov spaces,
Haar basis, fractional derivatives, spaces of homogeneous type}
\thanks{The authors was supported  by CONICET, UNL and ANPCyT (MINCyT)}

\begin{abstract}
 In this note we consider the pointwise convergence to the initial data for the
 solutions of some nonlocal dyadic Schrödinger equations on spaces of
 homogeneous type. We prove the a.e. convergence when the initial data belongs
 to a dyadic version of an $L^2$ based Besov space.
\end{abstract}

\maketitle
%

\section{Introduction}\label{sec:intro}

In quantum mechanics the position of a particle in the \textit{space}
is described by the probability density function
$\abs{\varphi}^2=\varphi \overline{\varphi}$
where $\varphi$ is a solution of the \textit{Schr\"{o}dinger equation}.
In the classical free particle model, the \textit{space} is the euclidean and the
\textit{Schr\"{o}dinger equation} is the associated to the Laplace operator,
i.e.\ $i\tfrac{\partial \varphi}{\partial t}=\triangle\varphi$. Hence the
probability of finding the particle inside the Borel set $E$ of the euclidean
space at time $t$ is given by $\int_E \abs{\varphi(x,t)}^2 dx$.

The pointwise convergence to the initial data for the classical Schrödinger
equation in euclidean settings is a hard problem. It is well known that some
regularity in the initial data is needed
\cite{Carleson80,DaKe82,KeRui83,Cow83,Sjo87,Vega88,TaVa2000}.

Nonlocal operators instead of the Laplacian in this basic model have been
considered previously in the euclidean space (see for example \cite{Las12}
and references in \cite{Val09}). The nonlocal fractional derivatives as
substitutes of the Laplacian become natural objects when the space itself
lacks any differentiable structure and only
an analysis of order less than one can be carried out.

We shall be brief in our introduction of the basic setting. For a more detailed
approach see \cite{ActisAimar}.
Let $(X,d,\mu)$ be a space of homogeneous type (see \cite{MaciasSegovia79}).
Let $\D$ be a dyadic family in $X$ as constructed by M.~Christ in \cite{Christ90}. Let
$\mathscr{H}$ be a Haar system for $L^2(X,\mu)$ associated to $\D$ as built in
\cite{Aimar02} (see also \cite{AiGo00}, \cite{AiBeIa07}). Following the basic
notation introduced in Section~\ref{sec:intro}, we shall use $\mathscr{H}$ itself
s the index set for the analysis and synthesis of signals defined on $X$. Precisely,
by $Q(h)$ we denote the member of $\D$ on which $h$ \textit{is based}. With $j(h)$
we denote the integer scale $j$ for which $Q(h)\in\D^j$.

The system $\mathscr{H}$ is an orthonormal basis for $L^2_0$, where $L^2_0$
coincides with $L^2(X.\mu)$ if $\mu(X)=+\infty$ and
$L^2_0=\{f\in L^2: \int_X f d\mu=0\}$ if $\mu(X)<\infty$. For a given $Q\in\D$
the number of wavelets $h$ based on $Q$ is $\#\vartheta (Q)-1$, where
$\vartheta (Q)$ is the offspring of $Q$ and $\#\vartheta (Q)$ is its cardinal.
The homogeneity property of the space together with the metric control of the
dyadic sets guarantee a uniform upper bound for $\#\vartheta (Q)$. On the other
hand $\#\vartheta (Q)\geq1$ for every $Q\in \D$.

Let $(X,d,\mu,\D,\mathscr{H})$ be given as before. For the sake of simplicity we shall assume along this paper that $X$ itself is a quadrant for $\D$. We say that X itself is a quadrant if any two cubes in $X$ have a common ancestor.
A distance in $X$ associated to $\D$ can be defined by
$\delta(x,y)=\min\{\mu(Q): Q\in\D \text{ such that } x,y \in Q\}$ when $x\neq y$ and $\delta(x,x)=0$.
The next lemma, borrowed to~\cite{ActisAimar}, reflects the one dimensional
character of $X$ equipped with $\delta$ and $\mu$.

\begin{lemma}[Lemma 3.1 in~\cite{ActisAimar}]\label{lem:integrals_of_delta}
 Let $0<\ep<1$, and let $Q$ be a given dyadic cube in $X$.
 Then, for $x\in Q$, we have
 \[\int_{X \backslash Q} \frac{d\mu(y)}{\delta(x,y)^{1+\ep}} \simeq \mu(Q)^{-\ep}.\]
\end{lemma}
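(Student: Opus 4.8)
The plan is to exploit the totally ordered, one-dimensional structure of the dyadic ancestors of $Q$. Fix $x\in Q$ and write $Q=Q_0\subsetneq Q_1\subsetneq Q_2\subsetneq\cdots$ for the chain of dyadic ancestors of $Q$; the quadrant hypothesis guarantees that every point of $X$ eventually lies in some $Q_k$, so that $X\setminus Q$ is the disjoint union of the dyadic annuli $Q_k\setminus Q_{k-1}$, $k\ge 1$. The decisive observation is that $\delta$ is constant on each annulus: if $y\in Q_k\setminus Q_{k-1}$, then the smallest dyadic cube containing both $x$ and $y$ is exactly $Q_k$, because any cube containing $x$ that also meets the complement of $Q_{k-1}$ must contain $Q_k$, while $Q_k$ itself contains both points. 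Hence $\delta(x,y)=\mu(Q_k)$ throughout $Q_k\setminus Q_{k-1}$, and the integral collapses to a series.

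Writing $m_k=\mu(Q_k)$ and using $\mu(Q_k\setminus Q_{k-1})=m_k-m_{k-1}$, I obtain
\[
\int_{X\setminus Q}\frac{d\mu(y)}{\delta(x,y)^{1+\ep}}=\sum_{k\ge 1}\frac{m_k-m_{k-1}}{m_k^{1+\ep}}.
\]
The upper bound then follows by comparing each term with an integral of the decreasing function $t\mapsto t^{-1-\ep}$: since
\[
\frac{m_k-m_{k-1}}{m_k^{1+\ep}}\le\int_{m_{k-1}}^{m_k}t^{-1-\ep}\,dt=\frac{1}{\ep}\bigl(m_{k-1}^{-\ep}-m_k^{-\ep}\bigr),
\]
the series telescopes and is dominated by $\ep^{-1}m_0^{-\ep}=\ep^{-1}\mu(Q)^{-\ep}$. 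No information beyond monotonicity is needed for this half.

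For the lower bound I would use the reverse comparison together with the doubling control of the dyadic structure. The doubling property of $\mu$ and the uniform bound on $\#\vartheta(Q)$ furnish a constant $b>1$ with $m_k\le b\,m_{k-1}$ for all $k$, so that $m_k^{1+\ep}\le b^{1+\ep}m_{k-1}^{1+\ep}$ and consequently
\[
\frac{m_k-m_{k-1}}{m_k^{1+\ep}}\ge b^{-(1+\ep)}\,\frac{m_k-m_{k-1}}{m_{k-1}^{1+\ep}}\ge\frac{b^{-(1+\ep)}}{\ep}\bigl(m_{k-1}^{-\ep}-m_k^{-\ep}\bigr).
\]
Summing telescopes to $b^{-(1+\ep)}\ep^{-1}\bigl(m_0^{-\ep}-\lim_k m_k^{-\ep}\bigr)$. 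Since the ancestors exhaust $X$, the measures $m_k$ increase to $\mu(X)=+\infty$ in the operative setting, so the boundary term vanishes and the lower bound $b^{-(1+\ep)}\ep^{-1}\mu(Q)^{-\ep}$ remains.

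I expect the only genuine obstacles to be the two structural facts that feed these estimates: the identification $\delta(x,y)=\mu(Q_k)$ on each annulus, which rests on the nesting of Christ's cubes and on the quadrant assumption, and the comparability $m_k\simeq m_{k-1}$ of consecutive ancestors, which is exactly where the doubling of $\mu$ and the uniform offspring bound enter. Once these are secured, the remainder reduces to the two elementary telescoping comparisons above, with constants depending only on $\ep$ and the doubling constant.
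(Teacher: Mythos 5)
Your overall strategy is sound, and in fact the paper itself offers no proof of this lemma (it is imported verbatim as Lemma 3.1 of \cite{ActisAimar}), so your argument can only be judged on its own merits. The decomposition of $X\setminus Q$ into the annuli $Q_k\setminus Q_{k-1}$ along the ancestor chain, the identification $\delta(x,y)=\mu(Q_k)$ on each annulus (which is correct: any cube containing $x\in Q$ and a point $y\notin Q_{k-1}$ must be an ancestor of $Q$ containing $Q_k$), and the telescoping comparison giving the upper bound $\ep^{-1}\mu(Q)^{-\ep}$ are all valid; this is the natural proof.

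The genuine gap is in your lower bound: you discard the boundary term of the telescoping sum by asserting that ``in the operative setting'' $m_k\to\mu(X)=+\infty$. But the paper's setting explicitly includes $\mu(X)<\infty$ --- this is why $L^2_0$ is defined differently in the finite-measure case, and the paper's closing example, the Sierpinski gasket with normalized Hausdorff measure, has total measure $1$. When $\mu(X)<\infty$, the quadrant hypothesis forces the ancestor chain to terminate at a maximal cube equal to $X$ up to a null set, and your telescoping only yields $b^{-(1+\ep)}\ep^{-1}\bigl(\mu(Q)^{-\ep}-\mu(X)^{-\ep}\bigr)$. The input you invoke, $m_k\le b\,m_{k-1}$, is an upper growth bound and by itself cannot convert this into $\gtrsim\mu(Q)^{-\ep}$: it does not exclude that $\mu(Q)$ be arbitrarily close to $\mu(X)$, in which case the difference degenerates. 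The repair, however, uses exactly the fact you already cited, read uniformly over cubes: every child of a Christ cube has measure at least $b^{-1}$ times that of its parent. Then it is cleaner to abandon telescoping for this half and keep only the first annulus. Let $Q_1$ be the first ancestor with $\mu(Q_1)>\mu(Q)$; since $Q_1$ has a child other than the one containing $Q$, and each child has measure $\ge b^{-1}\mu(Q_1)$, we get $\mu(Q_1\setminus Q)\ge b^{-1}\mu(Q_1)$, hence
\begin{equation*}
\int_{X\setminus Q}\frac{d\mu(y)}{\delta(x,y)^{1+\ep}}
\;\ge\;\frac{\mu(Q_1\setminus Q)}{\mu(Q_1)^{1+\ep}}
\;\ge\;b^{-1}\mu(Q_1)^{-\ep}
\;\ge\;b^{-1-\ep}\,\mu(Q)^{-\ep},
\end{equation*}
which holds whether $\mu(X)$ is finite or not. (Note also that when $X$ itself is a cube of finite measure the statement is vacuous for $Q=X$; the lemma implicitly concerns proper cubes, and the argument above requires $Q$ to have a strictly larger ancestor.)
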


Furthermore the integral of $\delta^{-1}(x,\cdot)$ diverges on each dyadic cube
containing $x$ and, when the measure of $X$ is not finite, on the complement of each
dyadic cube.

For a complex value function $f$ Lipschitz continuous with respect to $\delta$ define
\begin{equation*}
D^{\beta}f(x) =\int_{X}\frac{f(x)-f(y)}{\delta(x,y)^{1+\beta}} d\mu(y).
\end{equation*}
One of the key results relating the operator $D^\beta$ with the Haar system
is provided by the following spectral theorem contained in \cite{ActisAimar}.

\begin{theorem}[Theorem 3.1 in \cite{ActisAimar}]\label{thm:eingefunctions_of_Dbeta}
 Let $0<\beta<1$. For each $h \in \Ha$ we have
\begin{equation}\label{eq:eigenfunctions_of_Dsigma}
 D^\beta h(x)=m_h\mu(Q(h))^{-\beta} h(x),
\end{equation}
where $m_h$ is a constant that may depends of $Q(h)$ but there exist two finite and positive constants $M_1$ and $M_2$ such that
\begin{equation}\label{eq:cotas_m_Q}
M_1<m_h<M_2, \quad \text{ for all } h \in \Ha.
\end{equation}
\end{theorem}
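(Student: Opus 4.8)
The plan is to exploit the locally constant structure of $h$ together with the fact that $\delta$ takes only finitely many, geometrically controlled values on pairs of points coming from a fixed dyadic cube. Write $Q=Q(h)$ and let $\vartheta(Q)=\{Q_1,\dots,Q_N\}$ be its offspring. Since $h$ is based on $Q$, it is supported on $Q$, constant on each child, say $h\equiv c_i$ on $Q_i$, and it has vanishing mean, $\sum_{i=1}^N c_i\,\mu(Q_i)=0$. Fix $x\in Q$ and let $Q_{i_0}$ be the unique child with $x\in Q_{i_0}$, so that $h(x)=c_{i_0}$. I would then split
\begin{equation*}
D^\beta h(x)=\int_{X\setminus Q}\frac{h(x)-h(y)}{\delta(x,y)^{1+\beta}}\,d\mu(y)+\int_{Q}\frac{h(x)-h(y)}{\delta(x,y)^{1+\beta}}\,d\mu(y)
\end{equation*}
and evaluate the two pieces separately; note that the inner integrand vanishes identically on $Q_{i_0}$ (where $h(y)=h(x)$), so there is no singularity at $y=x$ and the Lipschitz hypothesis guarantees convergence.

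First I would treat the outer integral. For $y\notin Q$ we have $h(y)=0$, so this term equals $h(x)\int_{X\setminus Q}\delta(x,y)^{-1-\beta}\,d\mu(y)$. The key observation is that, by nestedness of the dyadic family, for $y\notin Q$ the smallest cube containing both $x$ and $y$ is the smallest ancestor of $Q$ that contains $y$; this cube depends only on $y$ and $Q$, not on the particular $x\in Q$. Hence $\delta(x,y)$ is independent of $x$ on $X\setminus Q$, the integral $I:=\int_{X\setminus Q}\delta(x,y)^{-1-\beta}\,d\mu(y)$ is a genuine constant on $Q$, and by Lemma~\ref{lem:integrals_of_delta} (with $\ep=\beta$) one has $I\simeq\mu(Q)^{-\beta}$ with comparability constants independent of $Q$.

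Next I would compute the inner integral. On $Q_{i_0}$ the numerator is zero. For $i\neq i_0$ and $y\in Q_i$, the smallest dyadic cube containing $x\in Q_{i_0}$ and $y\in Q_i$ is $Q$ itself, since distinct children of $Q$ share no smaller common ancestor; thus $\delta(x,y)=\mu(Q)$ is constant there. Therefore the inner integral equals $\mu(Q)^{-1-\beta}\sum_{i\neq i_0}(c_{i_0}-c_i)\mu(Q_i)$. Using $\sum_i c_i\mu(Q_i)=0$ to rewrite $\sum_{i\neq i_0}c_i\mu(Q_i)=-c_{i_0}\mu(Q_{i_0})$, this sum collapses to $c_{i_0}\mu(Q)$, so the inner integral equals $c_{i_0}\,\mu(Q)^{-\beta}=h(x)\,\mu(Q)^{-\beta}$.

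Combining the two pieces gives $D^\beta h(x)=h(x)\big(\mu(Q)^{-\beta}+I\big)=m_h\,\mu(Q)^{-\beta}h(x)$ with $m_h:=1+\mu(Q)^{\beta}I$, which is independent of $x$ by the previous step and so yields the eigenfunction relation \eqref{eq:eigenfunctions_of_Dsigma}. The uniform bounds \eqref{eq:cotas_m_Q} then follow immediately: since $I\simeq\mu(Q)^{-\beta}$ uniformly, the quantity $\mu(Q)^{\beta}I$ lies in a fixed interval $[\kappa_1,\kappa_2]\subset(0,\infty)$, so taking $M_1=1$ and $M_2=2+\kappa_2$ gives $M_1<m_h<M_2$ for all $h\in\Ha$. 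I expect the only genuine subtlety to be the two geometric claims about $\delta$ — that $\delta(x,y)$ is $x$-independent off $Q$ and that it equals $\mu(Q)$ across distinct children — both of which reduce to the nested, totally ordered structure of the dyadic cubes containing a fixed point; everything else is the mean-zero cancellation and a direct appeal to Lemma~\ref{lem:integrals_of_delta}.
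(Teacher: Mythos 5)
First, a point of reference: this paper never proves Theorem~\ref{thm:eingefunctions_of_Dbeta} itself; the statement is imported verbatim from \cite{ActisAimar} (Theorem 3.1 there), so the comparison is necessarily with the proof in that reference. That proof runs along exactly the lines you propose: split $D^\beta h(x)$ into the integrals over $Q(h)$ and over its complement, use nestedness of $\D$ to see that $\delta(x,y)=\mu(Q(h))$ when $x$ and $y$ lie in distinct children and that $\delta(x,y)$ is independent of $x\in Q(h)$ when $y\notin Q(h)$, exploit the vanishing mean of $h$, and control the outer integral by Lemma~\ref{lem:integrals_of_delta}. Your evaluation of both pieces for $x\in Q(h)$ is correct: the inner integral collapses exactly to $h(x)\mu(Q(h))^{-\beta}$ via $\sum_i c_i\mu(Q_i)=0$, and the outer integral equals $h(x)\,I$ with $I$ an $x$-independent constant comparable to $\mu(Q(h))^{-\beta}$.

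Two gaps remain; both are small but should be fixed. (1) The identity \eqref{eq:eigenfunctions_of_Dsigma} is asserted for every $x\in X$, while you only treat $x\in Q(h)$. For $x\notin Q(h)$ the right-hand side is $0$, so you must also check that $D^\beta h(x)=-\int_{Q(h)}h(y)\,\delta(x,y)^{-1-\beta}\,d\mu(y)=0$. This follows from the same nestedness observation you already made (for fixed $x\notin Q(h)$, any dyadic cube containing $x$ and a point of $Q(h)$ must contain all of $Q(h)$, so $\delta(x,\cdot)$ is constant on $Q(h)$) together with $\int_{Q(h)} h\,d\mu=0$; but without this case the theorem is not proved on all of $X$. (2) Your choice $M_1=1$ in \eqref{eq:cotas_m_Q} relies on the lower bound in Lemma~\ref{lem:integrals_of_delta}, which degenerates when $\mu(X)<\infty$ and $Q(h)$ is the top cube $X$ itself: there $X\setminus Q(h)=\emptyset$, hence $I=0$ and $m_h=1$ exactly (this occurs, for instance, for the two wavelets based on $S$ in the Sierpinski example of Section~\ref{sec:4}), so the strict inequality $M_1<m_h$ fails. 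Taking any $M_1<1$, say $M_1=\tfrac12$, together with your $M_2$, repairs this; no other step of your argument is affected.
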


Set $B^{\lambda}_{2}(X,\delta,\mu)$ to denote the space of those functions
$f\in L^2(X,\mu)$ satisfying
\begin{equation*}
\iint_{X\times X}\frac{\abs{f(x)-f(y)}^2}{\delta^{1+2\lambda}(x,y)} d\mu(x) d\mu(y)<\infty.
\end{equation*}
The projection operator defined on $L^2$ onto $V_0$ the subspace of functions
which are constant on each cube $Q\in\D^{0}$ is denoted by $P_0$. We are now in
position to state the main results of this paper.

\begin{theorem}\label{thm:Theorem2Main}
Let $0<\beta<\lambda<1$ and $u_0\in B^{\lambda}_2(X, \delta,\mu)$ with $P_0u_0=0$
be given. Then, the function $u$ defined on $\R^+$ by
\begin{equation}\label{eq:seriessolucion}
u(t)=-\sum_{h\in\mathscr{H}}e^{-i t m_h\mu(Q(h))^{-\beta}}\proin{u_0}{h}h,
\end{equation}
\begin{enumerate}[(\ref{thm:Theorem2Main}.a)]
\item belongs to $B^{\lambda}_{2}(X,\delta,\mu)$ for every $t>0$; \label{item:mainresultuno}
\item solves the problem
\begin{equation}\label{eq:Schrodinger_problem}
\begin{dcases}
i \frac{du}{dt} = D^{\beta}u  & t>0,\\
\medskip
u(0)=u_0  & \text{on } X.
\end{dcases}
\end{equation}
More precisely, $\frac{du}{dt}$ is the Fréchet derivative of $u(t)$ as a function of $t\in (0,\infty)$ with values in $B^{\lambda-\beta}_{2}(X,\delta,\mu)$  and $\lim_{t\to0^+}u(t)=u_0$ in
  $B^{\lambda}_{2}(X,\delta,\mu)$.\label{item:mainresultdos}
\end{enumerate}
\end{theorem}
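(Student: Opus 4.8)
The plan is to reduce all three assertions to a single Plancherel-type description of the Besov seminorm in terms of Haar coefficients, after which the evolution in \eqref{eq:seriessolucion} is seen to act on each coefficient by a unimodular phase, and everything follows from dominated convergence for series. So the first and central step will be to establish the characterization
\[
\norm{f}_{B^s_2}^2\simeq\sum_{h\in\Ha}\bigl(1+\mu(Q(h))^{-2s}\bigr)\abs{\proin{f}{h}}^2,\qquad 0<s<1 .
\]
To prove it I would introduce the bilinear form $\mathcal{E}_\lambda(f,g)=\iint_{X\times X}\frac{(f(x)-f(y))\overline{(g(x)-g(y))}}{\delta(x,y)^{1+2\lambda}}\,d\mu(x)\,d\mu(y)$, whose diagonal $\mathcal{E}_\lambda(f,f)$ is the Besov seminorm, and show that the Haar system diagonalizes it. The point is that $\delta(x,y)=\mu(Q(x,y))$ for the smallest dyadic cube $Q(x,y)$ containing $x$ and $y$, so the kernel is constant on each set $\{(x,y):Q(x,y)=Q\}$, a union of products $Q_i\times Q_j$ of distinct children of $Q$. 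Decomposing $X\times X$ in this way and using that each $h$ has mean zero on $Q(h)$, is constant on the children of $Q(h)$, and vanishes off $Q(h)$, a short case analysis (cubes disjoint, equal, or nested) makes every off-diagonal contribution cancel, so $\mathcal{E}_\lambda(h,h')=0$ for $h\neq h'$; the diagonal reduces to a telescoping series over the ancestors of $Q(h)$ that is comparable to $\mu(Q(h))^{-2\lambda}$, the series converging because $\mu$ of successive ancestors grows geometrically (homogeneity), with size controlled exactly as in Lemma~\ref{lem:integrals_of_delta}. (For $2\lambda<1$ this is also visible from $\mathcal{E}_{2\lambda}(f,g)=2\proin{D^{2\lambda}f}{g}$ together with Theorem~\ref{thm:eingefunctions_of_Dbeta}, which is a useful cross-check.) Passing from the basis to a general $f$ requires justifying the interchange of the double integral with the expansion $f=\sum_h\proin{f}{h}h$, which I would do by a standard approximation with finite partial sums.

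With this in hand I would prove (a) and the mapping property of $D^\beta$ together. Writing $\Lambda_h=m_h\mu(Q(h))^{-\beta}$, Theorem~\ref{thm:eingefunctions_of_Dbeta} gives that the $h$-coefficient of $u(t)$ is $e^{-it\Lambda_h}\proin{u_0}{h}$ (up to the global sign in \eqref{eq:seriessolucion}), and since $\abs{e^{-it\Lambda_h}}=1$ the coefficients of $u(t)$ share the moduli of those of $u_0$; the characterization then gives $\norm{u(t)}_{B^\lambda_2}\simeq\norm{u_0}_{B^\lambda_2}$ for every $t>0$, which is (a). Applying $D^\beta$ termwise, using $M_1<m_h<M_2$ from \eqref{eq:cotas_m_Q} and the elementary inequality $\mu(Q)^{-2\beta}\le 1+\mu(Q)^{-2\lambda}$ (valid precisely because $\beta<\lambda$), the characterization at level $\lambda-\beta$ yields
\[
\norm{D^\beta u(t)}_{B^{\lambda-\beta}_2}^2\simeq\sum_h\bigl(\mu(Q(h))^{-2\beta}+\mu(Q(h))^{-2\lambda}\bigr)\abs{\proin{u_0}{h}}^2\lesssim\norm{u_0}_{B^\lambda_2}^2 ,
\]
so $D^\beta$ maps $B^\lambda_2$ boundedly into $B^{\lambda-\beta}_2$ and every series below converges in that space.

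For (b) I would treat the two limits in the same way. For the initial data, the characterization at level $\lambda$ gives $\norm{u(t)-u_0}_{B^\lambda_2}^2\simeq\sum_h(1+\mu(Q(h))^{-2\lambda})\,\abs{e^{-it\Lambda_h}-1}^2\abs{\proin{u_0}{h}}^2$; each summand tends to $0$ as $t\to0^+$ and is dominated by $4(1+\mu(Q(h))^{-2\lambda})\abs{\proin{u_0}{h}}^2$, whose sum is $\simeq\norm{u_0}_{B^\lambda_2}^2<\infty$, so dominated convergence for series gives $\lim_{t\to0^+}\norm{u(t)-u_0}_{B^\lambda_2}=0$. For the Fréchet derivative I take the candidate $v(t)=-iD^\beta u(t)$, so that $i\,du/dt=D^\beta u$, and estimate in $B^{\lambda-\beta}_2$:
\[
\norm{\tfrac{u(t+s)-u(t)}{s}-v(t)}_{B^{\lambda-\beta}_2}^2\simeq\sum_h\bigl(1+\mu(Q(h))^{-2(\lambda-\beta)}\bigr)\Bigl|\tfrac{e^{-is\Lambda_h}-1}{s}+i\Lambda_h\Bigr|^2\abs{\proin{u_0}{h}}^2 .
\]
For each fixed $h$ the bracket tends to $0$ as $s\to0$, while $\bigl|\tfrac{e^{-is\Lambda_h}-1}{s}+i\Lambda_h\bigr|\le 2\Lambda_h\simeq\mu(Q(h))^{-\beta}$, so each summand is dominated by a constant times $(\mu(Q(h))^{-2\beta}+\mu(Q(h))^{-2\lambda})\abs{\proin{u_0}{h}}^2$, summable and comparable to $\norm{u_0}_{B^\lambda_2}^2$ exactly as above. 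Dominated convergence then shows the difference quotient converges to $v(t)$ in $B^{\lambda-\beta}_2$, which is (b).

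The main obstacle is Step~1: the exact diagonalization of $\mathcal{E}_\lambda$ on the Haar basis and, above all, the justification that for a general $f\in B^\lambda_2$ the seminorm equals the diagonal sum, which amounts to interchanging the double integral with the infinite Haar expansion and will need a careful density/approximation argument. Once the characterization is secured, every remaining step is a mechanical consequence of the unimodularity of the phases and of dominated convergence for series, the hypothesis $\beta<\lambda$ entering only through the inequality $\mu(Q)^{-2\beta}\le 1+\mu(Q)^{-2\lambda}$, which both keeps the relevant series comparable to $\norm{u_0}_{B^\lambda_2}^2$ and explains why the derivative naturally lands in $B^{\lambda-\beta}_2$ rather than in $B^\lambda_2$.
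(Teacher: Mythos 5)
Your proposal is correct and follows essentially the same route as the paper: the paper's own proof of this theorem rests precisely on the Haar-coefficient characterization of $B^\sigma_2$ (Theorem~\ref{thm:characterizationBesov}, established there via the same diagonalization $\nu(h,\widetilde h)=0$ for $h\neq\widetilde h$ and $\nu(h,h)\simeq\mu(Q(h))^{-2\sigma}$ by the same three-case analysis, with Fatou plus a duality argument handling general $f$ --- the step you flag as the main obstacle), after which (a) and (b) are obtained, exactly as you argue, from unimodularity of the phases, termwise application of $D^\beta$ through Theorem~\ref{thm:eingefunctions_of_Dbeta}, and a dominated-convergence (finite subfamily plus small tail) argument in the coefficient norm. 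The only point worth noting is that your form of the characterization, $\norm{f}^2_{B^s_2}\simeq\sum_{h\in\Ha}\bigl(1+\mu(Q(h))^{-2s}\bigr)\abs{\proin{f}{h}}^2$, silently uses $\sum_{h}\abs{\proin{f}{h}}^2=\norm{f}^2_{L^2}$, which is legitimate here only because the hypothesis $P_0u_0=0$ keeps every function in play in the closed span of $\Ha$.
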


\begin{theorem}\label{thm:Theorem3Main}
Let $0<\beta<\lambda<1$ and $u_0\in B^{\lambda}_2(X, \delta,\mu)$ with $P_0u_0=0$
be given. Then,
\begin{enumerate}[(\ref{thm:Theorem3Main}.a)]
\item there exists $Z\subset X$ with  $\mu(Z)=0$ such that the series \eqref{eq:seriessolucion} defining $u(t)$ converges pointwise for every $t\in [0,1)$ outside $Z$;\label{item:mainresulttres}
\item $u(t)\to u_0$  pointwise almost everywhere on $X$ with respect to $\mu$
when $t\to 0$.\label{item:mainresultcuatro}
\end{enumerate}
\end{theorem}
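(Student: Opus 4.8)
The plan is to reduce both assertions to a single, $t$-independent fact: the a.e.\ finiteness of the majorant
\[
Su_0(x)=\sum_{h\in\Ha}\abs{\proin{u_0}{h}}\,\abs{h(x)}.
\]
The point that makes this possible is that every phase factor $e^{-itm_h\mu(Q(h))^{-\beta}}$ has modulus one, so that $\abs{u(x,t)}\le Su_0(x)$ for \emph{all} $t$ at once. Thus I would first build an $L^2$ bound for $Su_0$ in terms of the Besov norm, and then read off the convergence statement \ref{item:mainresulttres} and the limit \ref{item:mainresultcuatro} from it, with dominated convergence doing the rest.

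First I would record the Haar description of the Besov space, namely the equivalence
\[
\norm{u_0}_{B^\lambda_2}^2\simeq\sum_{h\in\Ha}\mu(Q(h))^{-2\lambda}\abs{\proin{u_0}{h}}^2,
\]
which follows from testing the symmetrized double integral against the basis $\Ha$ (it is available from~\cite{ActisAimar} and is compatible with the spectral identity of Theorem~\ref{thm:eingefunctions_of_Dbeta}); since $P_0u_0=0$ and $X$ is a quadrant, the sum runs only over $h$ with $j(h)\ge0$. Next comes the maximal estimate. I would group $Su_0$ by scale, $Su_0(x)=\sum_{j\ge0}a_j(x)$ with $a_j(x)=\sum_{h:\,j(h)=j}\abs{\proin{u_0}{h}}\abs{h(x)}$, and note that for fixed $x$ and $j$ only the uniformly boundedly many $h$ based on the unique cube $Q^j_x\in\D^j$ containing $x$ contribute. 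Writing $N$ for the uniform bound on $\#\vartheta(Q)$, Cauchy--Schwarz in this finite sum together with $\int_Q\abs{h}^2\,d\mu=1$ gives
\[
\int_X\mu(Q^j_x)^{-2\lambda}\,a_j(x)^2\,d\mu(x)\le N\sum_{h:\,j(h)=j}\mu(Q(h))^{-2\lambda}\abs{\proin{u_0}{h}}^2,
\]
so that summation in $j$ and the characterization bound $\int_X\sum_{j\ge0}\mu(Q^j_x)^{-2\lambda}a_j(x)^2\,d\mu$ by $\simeq\norm{u_0}_{B^\lambda_2}^2$. On the other hand, the geometric decay of the measures of Christ's cubes, $\mu(Q^j_x)\lesssim\eta^{\,j}$ with $0<\eta<1$, yields $\sum_{j\ge0}\mu(Q^j_x)^{2\lambda}\le C_\lambda$ uniformly in $x$ because $\lambda>0$ and $j\ge0$. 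A Cauchy--Schwarz split across scales,
\[
Su_0(x)\le\Big(\sum_{j\ge0}\mu(Q^j_x)^{2\lambda}\Big)^{1/2}\Big(\sum_{j\ge0}\mu(Q^j_x)^{-2\lambda}a_j(x)^2\Big)^{1/2},
\]
then produces the strong bound $\norm{Su_0}_{L^2(X,\mu)}\lesssim\norm{u_0}_{B^\lambda_2}$. In particular $Su_0(x)<\infty$ off a $\mu$-null set $Z$.

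With this in hand the conclusions are immediate. For $x\notin Z$ the series~\eqref{eq:seriessolucion} is dominated termwise by the summable family $\abs{\proin{u_0}{h}}\abs{h(x)}$ uniformly in $t$, so it converges absolutely for every $t$, which is \ref{item:mainresulttres}. For \ref{item:mainresultcuatro} I would apply dominated convergence to the series itself: for fixed $x\notin Z$,
\[
u(x,t)-u_0(x)=-\sum_{h\in\Ha}\big(e^{-itm_h\mu(Q(h))^{-\beta}}-1\big)\proin{u_0}{h}\,h(x),
\]
where each summand tends to $0$ as $t\to0^+$ and is bounded by $2\abs{\proin{u_0}{h}}\abs{h(x)}$, whose sum is $2\,Su_0(x)<\infty$; hence the whole sum tends to $0$. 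Notably, no separate density argument is needed, precisely because the unimodularity of the phases furnishes a fixed integrable dominating series.

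The step I expect to carry the real weight is securing the Haar characterization of $B^\lambda_2$ and controlling the sum over scales: one must make sure the cross-scale contributions are genuinely summable, which is exactly where the geometric decay of $\mu(Q^j_x)$ and the restriction $j(h)\ge0$ coming from $P_0u_0=0$ are indispensable. Once the $t$-independent $L^2$ majorant $Su_0$ is established, everything else is routine.
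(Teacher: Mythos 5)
Your argument is correct in its overall strategy, and it is genuinely different from the paper's. The paper proves Theorem~\ref{thm:Theorem3Main} through maximal operators: it introduces the partial-sum operators $S^N_t$ and the maximal operators $S^*_t$, $S^*$, proves the pointwise bounds $S^*_tf\le C\,t\,M^{\#}_{\lambda,dy}f+2M_{dy}f$ and $S^*f\le C\,M^{\#}_{\lambda,dy}f+2M_{dy}f$ (Lemma~\ref{lem:estimacionesdosmaximales}), obtains $\norm{S^*f}_{L^2}\le C\norm{f}_{B^{\lambda}_2}$ from the Calder\'on-maximal estimate of Lemma~\ref{lem:acotacionMaximalCalderon}, and then invokes the standard density argument (finite Haar sums converge everywhere, and are dense). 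You instead exploit the unimodularity of the phases to dominate all the sums simultaneously by the $t$-independent majorant $Su_0=\sum_{h}\abs{\proin{u_0}{h}}\abs{h}$, bound it in $L^2$ via the Haar characterization of $B^{\lambda}_2$ (which you need not re-derive: it is Theorem~\ref{thm:characterizationBesov} of the paper), and then read off both items from absolute convergence plus dominated convergence for series, with no density argument at all. Where your argument works it gives more: the exceptional set $Z$ serves every $t\ge 0$ rather than $t\in[0,1)$, the convergence is absolute, and you never use $\beta<\lambda$, only $\lambda>0$. What the paper's route buys in exchange is the maximal inequalities themselves, which are of independent interest and are the tool that survives in settings where an absolutely convergent majorant is unavailable.

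There is, however, one step whose justification is wrong as stated, though it is repairable --- and the repair is exactly a fact the paper also uses silently. You claim that Christ's cubes satisfy $\mu(Q^j_x)\lesssim\eta^{\,j}$ with $\eta<1$ uniformly in $x$, and deduce $\sum_{j\ge0}\mu(Q^j_x)^{2\lambda}\le C_\lambda$. In a general space of homogeneous type this is false: the paper only assumes $\#\vartheta(Q)\ge1$, so a cube can be its own unique child through arbitrarily many scales (e.g.\ at an atom of $\mu$), in which case $\mu(Q^j_x)$ does not decay at all and the full sum over $j$ diverges. The fix is to restrict the cross-scale Cauchy--Schwarz to those $j$ with $a_j(x)\neq0$, i.e.\ to cubes with $\#\vartheta(Q^j_x)\ge2$, since only those carry Haar functions. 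For such a branching cube $Q$, the inner-ball property of Christ's cubes together with doubling gives $\mu(Q')\ge c\,\mu(Q)$ for every child $Q'$, and hence, there being at least two children, every child has measure at most $(1-c)\mu(Q)$; so the measures of the branching cubes containing $x$ do decay geometrically and $\sum_{j:\,a_j(x)\neq0}\mu(Q^j_x)^{2\lambda}\le C_\lambda\,\mu(Q^0_x)^{2\lambda}$. This is precisely what the paper assumes implicitly when it writes $\sum_{j=0}^{N}\mu(Q(j,x))^{\lambda-\beta}\le\widetilde{C}$ in the proof of Lemma~\ref{lem:estimacionesdosmaximales}, where again only branching scales contribute. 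Finally, to make your constant uniform in $x$ you need $\sup_{Q\in\D^0}\mu(Q)<\infty$; if you do not wish to assume that normalization, run the $L^2$ estimate on each $Q\in\D^0$ separately: local square-integrability of $Su_0$ already yields $\mu(Z)=0$, which is all the theorem requires.
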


Let us point out that the operator $D^\beta$ in the framework of $p$-adic analysis is called the Vladimirov operator and has been extensively studied in  recent years, see e.g., \cite{Koz02} and references therein. It has already been proved that the collection of the eigenfunctions of $D^\beta$ coincides with a wavelet basis. Let us also notice that from a probabilistic approach, in \cite{Kig10, Kig13},  the Haar wavelets have been identified as eigenfunctions
of differential operators. On the other hand, since the work of S. Petermichl in \cite{Pet00}, where a representation for the Hilbert transform is given as an average of dyadic shifts, dyadic techniques has proved to be fundamental and useful in harmonic analysis (see \cite{LMP14} and references therein). From
this point of view, our problem can be regarded as a first approximation to the general nonlocal Schrödinger model in Ahlfors metric measure spaces, where
instead of $\delta$ the metric is the underlying distance $d$ in the space.

The paper is organized in three sections. The second one is devoted to introduce a characterization in terms of Haar coefficients
of the dyadic Besov spaces $B^\lambda_2(X,\delta,\mu)$.
In Section~\ref{sec:4} we prove our main results, which are contained in Theorems~\ref{thm:Theorem2Main} and~\ref{thm:Theorem3Main}.
In this last section we also illustrate our results in the Sierpinski gasket.

\section{Characterization of the Besov space
 $B^{\sigma}_2(X,\delta,\mu)$ in terms of Haar coefficients}
 \label{sec:3}

The aim of this section is to characterize $B^{\sigma}_2(X,\delta,\mu)$ in terms of the sequence $\{\proin{f}{h}: h\in\mathscr{H}\}$ for $0<\sigma<1.$

\begin{theorem}\label{thm:characterizationBesov}
let $0<\sigma<1$ be given. The space $B^{\sigma}_2(X,\delta,\mu)$ coincides with
the subspace of $L^2(X,\mu)$ of those functions $f$ for which
\begin{equation*}
\sum_{h\in\mathscr{H}}\frac{\abs{\proin{f}{h}}^2}{\mu(Q(h))^{2\sigma}}<\infty.
\end{equation*}
Moreover,
\begin{equation*}
\norm{f}^2_{B^{\sigma}_2(X,\delta,\mu)}\simeq \norm{f}^2_{L^2(X,\mu)}+ \sum_{h\in\mathscr{H}}\frac{\abs{\proin{f}{h}}^2}{\mu(Q(h))^{2\sigma}}.
\end{equation*}
\end{theorem}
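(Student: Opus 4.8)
The plan is to reduce everything to the Besov seminorm and then diagonalize it against the Haar basis. Since $\mathscr{H}$ is an orthonormal basis of $L^2_0$, Parseval's identity controls $\norm{f}^2_{L^2(X,\mu)}$ in terms of $\set{\proin{f}{h}}$ (together with the mean of $f$ when $\mu(X)<\infty$, a term that does not affect the seminorm), so it suffices to prove
\[
[f]^2:=\iint_{X\times X}\frac{\abs{f(x)-f(y)}^2}{\delta(x,y)^{1+2\sigma}}\,d\mu(x)\,d\mu(y)\simeq\sum_{h\in\mathscr{H}}\frac{\abs{\proin{f}{h}}^2}{\mu(Q(h))^{2\sigma}}.
\]
Writing $f=\sum_h\proin{f}{h}h$ and expanding, the seminorm becomes the Hermitian form $\sum_{h,h'}\proin{f}{h}\overline{\proin{f}{h'}}\,A(h,h')$, where
\[
A(h,h')=\iint_{X\times X}\frac{(h(x)-h(y))\overline{(h'(x)-h'(y))}}{\delta(x,y)^{1+2\sigma}}\,d\mu(x)\,d\mu(y).
\]
I would prove that this form is \emph{diagonal}, with $A(h,h)\simeq\mu(Q(h))^{-2\sigma}$ and $A(h,h')=0$ for $h\neq h'$; the stated equivalence is then immediate.

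For the diagonal entries I would use the ultrametric description $\delta(x,y)=\mu(Q_{x,y})$, where $Q_{x,y}$ is the smallest dyadic cube containing both points, to split $A(h,h)$ into the contribution of pairs lying in distinct children of $Q:=Q(h)$ and the contribution of pairs with exactly one point outside $Q$ (pairs inside a single child, or with both points outside $Q$, contribute nothing). On the first region $\delta(x,y)=\mu(Q)$ is constant, and a direct computation using $\int_Q h\,d\mu=0$ and $\int_Q\abs{h}^2\,d\mu=1$ evaluates that part to exactly $2\mu(Q)^{-2\sigma}$. On the second region I would use that the outer factor $\int_{X\setminus Q}\delta(x,y)^{-1-2\sigma}\,d\mu(y)$, by Lemma~\ref{lem:integrals_of_delta} (whose geometric-series proof extends verbatim from $\ep\in(0,1)$ to $\ep=2\sigma\in(0,2)$), is comparable to $\mu(Q)^{-2\sigma}$ uniformly in $x\in Q$; since $\int_Q\abs{h}^2\,d\mu=1$ this adds a further $\simeq\mu(Q)^{-2\sigma}$. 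Hence $A(h,h)\simeq\mu(Q)^{-2\sigma}$.

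The conceptual heart is the vanishing of the off-diagonal entries, where the one-dimensional nature of $(X,\delta,\mu)$ is used. After symmetrizing, $A(h,h')=2\iint (h(x)-h(y))\overline{h'(x)}\,\delta(x,y)^{-1-2\sigma}\,d\mu(x)\,d\mu(y)$, an absolutely convergent integral supported in $x\in Q(h')$. Two dyadic cubes are nested or disjoint. The key geometric fact is that for $y\notin Q(h')$ the value $\delta(x,y)$ does not depend on $x\in Q(h')$ (it is the measure of the smallest cube containing $Q(h')$ and $y$); combined with the fact that $h$ is constant on $Q(h')$ whenever $Q(h')\subsetneq Q(h)$, this forces the inner $x$-integral to be a constant multiple of $\int_{Q(h')}\overline{h'}\,d\mu=0$ in the nested case, and to vanish through $\int_X h\,d\mu=0$ in the disjoint case. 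When $Q(h)=Q(h')$ but $h\neq h'$, the same two-region split produces the factor $\proin{h}{h'}=0$. Thus $A(h,h')=0$ in all cases.

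Finally I would justify the passage from algebra to analysis. The identity $[f]^2=\sum_h A(h,h)\abs{\proin{f}{h}}^2$ is elementary for finite Haar sums, where interchanging the finite sum with the integral is harmless. For general $f$ I would use that the partial sums $S_N\to f$ in $L^2$ and invoke Fatou's lemma to obtain lower semicontinuity of $[\,\cdot\,]$, giving $[f]^2\le\sum_h A(h,h)\abs{\proin{f}{h}}^2$; the diagonal structure makes $(S_N)$ Cauchy in the seminorm exactly when the right-hand side is finite, which yields the reverse bound and completeness. I expect this limiting passage — together with the absolute-convergence checks needed to apply Fubini in the definition of $A(h,h')$ — to be the main technical obstacle, the diagonalization itself being clean once the geometry of $\delta$ is exploited.
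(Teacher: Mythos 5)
Your diagonalization of the quadratic form is exactly the paper's core lemma (Lemma~\ref{lem:equivalenceformintegrals}): the case analysis over equal, disjoint and nested cubes, the use of the constancy of $\delta(x,\cdot)$ outside a cube together with vanishing means and orthogonality, and the evaluation $A(h,h)\simeq\mu(Q(h))^{-2\sigma}$ via Lemma~\ref{lem:integrals_of_delta} applied with exponent $2\sigma$ (a point you rightly flag; the paper uses it silently). Your Fatou argument giving $[f]^2\le\sum_h A(h,h)\abs{\proin{f}{h}}^2$ is also the paper's argument, modulo its preliminary localization to maximal cubes of measure less than one. One small expository point: your off-diagonal argument, as written, covers $Q(h')\subsetneq Q(h)$ but not $Q(h)\subsetneq Q(h')$, where $h$ is \emph{not} constant on $Q(h')$; you should say explicitly that this remaining case follows from $A(h,h')=\overline{A(h',h)}$ (the paper likewise treats only three cases).

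The genuine gap is the reverse inequality, and your proposed mechanism does not close it. To prove that the two spaces \emph{coincide} you must show: if $f\in L^2$ and $[f]<\infty$, then $\sum_h A(h,h)\abs{\proin{f}{h}}^2\le C\,[f]^2$. Your remark that ``$(S_N)$ is Cauchy in the seminorm exactly when the right-hand side is finite'' is true but runs in the same direction as Fatou: finiteness of the sum implies Cauchyness, hence $f\in B^{\sigma}_2(X,\delta,\mu)$ with seminorm equal to the sum. Nothing you have shown forces $(S_N)$ to be Cauchy from the mere finiteness of $[f]$, so an $f$ with finite seminorm and divergent coefficient sum is not excluded. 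What is needed is the uniform bound $[S_N]\le[f]$ for the partial sums $S_N=\sum_{h\in\mathscr{F}_N}\proin{f}{h}h$, and the paper obtains it by diagonalizing the \emph{mixed} pairing of the general $L^2$ function $f$ (which is not a finite Haar sum) against the finite sum $S_N$:
\begin{equation*}
[S_N]^2=\sum_{h\in\mathscr{F}_N}A(h,h)\abs{\proin{f}{h}}^2
=\iint_{X\times X}\frac{\bigl(f(x)-f(y)\bigr)\overline{\bigl(S_N(x)-S_N(y)\bigr)}}{\delta(x,y)^{1+2\sigma}}\,d\mu(x)\,d\mu(y)
\le [f]\,[S_N],
\end{equation*}
whence $[S_N]\le[f]$ and, letting $N\to\infty$, the reverse bound. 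The middle equality is precisely the paper's Lemma~\ref{lem:polarizationextended}, and it is not harmless algebra: one cannot simply expand $f$ in the bilinear form, since $f$ is an infinite series; its proof uses that differences of finite Haar sums vanish near the diagonal (Lemma~\ref{lem:diferenciaentornodiagonal}), so that the kernel $(S_N(x)-S_N(y))\,\delta(x,y)^{-1-2\sigma}$ is bounded and one may pass to the limit $f_n\to f$ in $L^1$ of a finite-measure cube. This duality step (the paper runs it with an arbitrary finitely supported coefficient sequence, which is the same thing) is exactly the idea missing from your sketch; you correctly identified the limiting passage as the main obstacle, but the Cauchy-in-seminorm observation you offer in its place only reproves the direction you already had.
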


Let us state some important lemmas that will be useful in for the proof of
the above theorem.

\begin{lemma}\label{lem:equivalenceformintegrals}
Let $0<\sigma<1$ and $h,\widetilde h \in \Ha$ be given. Then
\[\nu(h,\widetilde{h}):=\iint_{X\times X} \frac{[h(x)-h(y)][\widetilde{h}(x)-\widetilde{h}(y)]}{\delta(x,y)^{1+2\sigma}}d\mu(y) d\mu(x) = 0,\]
if $h\neq \widetilde h$, and
\[\nu(h,h) =\iint_{X\times X} \frac{[h(x)-h(y)]^2}{\delta(x,y)^{1+2\sigma}}d\mu(y) d\mu(x) \simeq \mu(Q(h))^{-2\sigma},\]
where the equivalence constants depend only on the geometric constants of the space.
\end{lemma}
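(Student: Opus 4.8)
The plan is to reduce the bilinear form $\nu$ to the quadratic form of the operator $D^{2\sigma}$ and then exploit that the Haar functions are its eigenfunctions. The key device is the symmetrization identity
$$\nu(h,\widetilde h) = 2\proin{D^{2\sigma}h}{\widetilde h},$$
obtained by writing out the double integral, using $\delta(x,y)=\delta(y,x)$ to interchange the roles of $x$ and $y$ in one copy of the integrand, and recognizing the inner $y$-integral of $[h(x)-h(y)]\,\delta(x,y)^{-(1+2\sigma)}$ as $D^{2\sigma}h(x)$. Granting this identity together with the eigenvalue relation $D^{2\sigma}h=m_h\,\mu(Q(h))^{-2\sigma}h$, the lemma follows immediately: for $h\neq\widetilde h$ the orthonormality of $\Ha$ gives $\nu(h,\widetilde h)=2m_h\,\mu(Q(h))^{-2\sigma}\proin{h}{\widetilde h}=0$, while for $h=\widetilde h$ we obtain $\nu(h,h)=2m_h\,\mu(Q(h))^{-2\sigma}\norm{h}^2_{L^2}=2m_h\,\mu(Q(h))^{-2\sigma}$, and \eqref{eq:cotas_m_Q} yields $\nu(h,h)\simeq\mu(Q(h))^{-2\sigma}$ with constants depending only on the geometry.

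Two points require justification. First, before manipulating the double integral I would verify that it converges absolutely, so that Fubini and the swap $x\leftrightarrow y$ are legitimate. By the Cauchy--Schwarz inequality with respect to the measure $\delta(x,y)^{-(1+2\sigma)}\,d\mu(x)\,d\mu(y)$ it suffices to bound $\nu(h,h)$. The essential mechanism is that $h$ is constant on the offspring of $Q(h)$, so $[h(x)-h(y)]^2$ vanishes whenever $x,y$ lie in the same child; this kills the non-integrable singularity of the kernel on the diagonal. Away from the diagonal one has $\int_{X\setminus Q}\delta(x,\cdot)^{-(1+2\sigma)}\,d\mu\lesssim\mu(Q)^{-2\sigma}$, summing the geometric series coming from the dyadic shells around $x$, and one integrates this bound using that $h$ is bounded and supported on $Q(h)$. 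This gives $\nu(h,h)<\infty$.

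The delicate point is the spectral relation itself: Theorem~\ref{thm:eingefunctions_of_Dbeta} is stated for exponents in $(0,1)$, whereas here I need it for $2\sigma\in(0,2)$, which may exceed $1$ when $\sigma\ge 1/2$. For $2\sigma<1$ I would quote the theorem directly; for $2\sigma\ge 1$ I would re-run its proof, whose only analytic input is the convergence of the tail integral $\int_{X\setminus Q(h)}\delta(x,\cdot)^{-(1+2\sigma)}\,d\mu$, valid for every positive exponent by the geometric-series argument above. Concretely, for $x$ in a child $Q^i$ of $Q=Q(h)$ the integral defining $D^{2\sigma}h(x)$ splits into three pieces: the part over $Q^i$ vanishes; the part over the remaining children contributes $h(x)\,\mu(Q)^{-2\sigma}$ after using $\int_Q h\,d\mu=0$ and that $\delta\equiv\mu(Q)$ there; and the part over $X\setminus Q$ contributes $h(x)\,c_Q\,\mu(Q)^{-2\sigma}$, where $c_Q=\mu(Q)^{2\sigma}\int_{X\setminus Q}\delta(x,\cdot)^{-(1+2\sigma)}\,d\mu$ is independent of $x\in Q$ (the smallest cube containing $x\in Q$ and any $y\notin Q$ is always an ancestor of $Q$) and is bounded by Lemma~\ref{lem:integrals_of_delta} and its extension. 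Hence $D^{2\sigma}h=m_h\,\mu(Q)^{-2\sigma}h$ with $m_h=1+c_Q$ satisfying $1\le m_h\le M$, exactly as the first paragraph requires. The main obstacle is thus this mild extension of the eigenvalue relation past the exponent $1$, together with the bookkeeping that secures absolute convergence; once those are in hand, the conclusion is purely formal.
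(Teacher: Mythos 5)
Your proposal is correct, but it takes a genuinely different route from the paper. The paper proves the lemma by a self-contained case analysis on the relative position of $Q(h)$ and $Q(\widetilde h)$ --- equal, disjoint, strictly nested --- using in each case that $\delta(x,y)$ is constant on the relevant blocks together with the zero mean and mutual orthogonality of the Haar functions, and then computes $\nu(h,h)=2I+II$ directly, with the off-cube term $I$ controlled by Lemma~\ref{lem:integrals_of_delta}. You instead symmetrize to get $\nu(h,\widetilde h)=2\proin{D^{2\sigma}h}{\widetilde h}$ and let the orthonormality of $\Ha$ absorb all the off-diagonal cancellation at once; the price is that Theorem~\ref{thm:eingefunctions_of_Dbeta} is stated only for exponents in $(0,1)$ while you need exponent $2\sigma\in(0,2)$, and you correctly identify this and re-derive the eigenvalue relation. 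Your three-piece computation of $D^{2\sigma}h(x)$ for $x\in Q(h)$ is right; for the identity $D^{2\sigma}h=m_h\,\mu(Q(h))^{-2\sigma}h$ to hold as functions you should also record the one-line case $x\notin Q(h)$, where both sides vanish because $\delta(x,\cdot)$ is constant on $Q(h)$ and $\int_{Q(h)} h\,d\mu=0$ (this matters, since $\proin{D^{2\sigma}h}{\widetilde h}$ integrates over all of $X$). Your approach buys brevity and a conceptual unification --- the orthogonality statement becomes a triviality about eigenfunctions of a symmetric form --- while the paper's buys independence from any extension of its quoted spectral theorem. Two further remarks. First, your care about exponents past $1$ is not pedantry: the paper itself applies Lemma~\ref{lem:integrals_of_delta} with $\ep=2\sigma$, which likewise exceeds the stated range when $\sigma\ge 1/2$, so the geometric-series extension you describe is tacitly needed there too. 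Second, Cauchy--Schwarz gives absolute convergence of the full product integrand, but to split off and swap $x\leftrightarrow y$ in the single term $[h(x)-h(y)]\widetilde h(y)$ you need each of the two pieces to be separately integrable; this follows from the same estimates you already invoke (the integrand vanishes unless $\delta(x,y)\ge\mu(Q(h))$, plus the off-diagonal bound), so it is a line to add rather than a gap.
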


\begin{proof}
For the first part of the proof, i.e. $\nu(h,\widetilde h)=0$, when
$h\neq \widetilde h$, we shall divide our analysis in three cases according to
the relative positions of $Q(h):=Q$ and $Q(\widetilde{h}):=\widetilde{Q}$,
(i)~$Q=\widetilde{Q}$, (ii) $Q\cap \widetilde{Q}=\emptyset$ and (iii)
$Q\subsetneqq \widetilde{Q}$.

Let us start by (i). Set $\Pi_{h \widetilde{h}}
(x,y) :=[h(x)-h(y)][\widetilde{h}(x)-\widetilde{h}(y)]$. Notice that for $x$ and $y$
in $X\setminus Q$ we have $\Pi_{h \widetilde{h}} (x,y)=0$. On the other hand, for
$x\in Q$ and $y\in X\setminus Q$ we have
$\Pi_{h \widetilde{h}}(x,y)=h(x)\widetilde{h}(x)$. Hence
\begin{align*}
\iint_{X\times X}\frac{\Pi_{h \widetilde{h}} (x,y)}{\delta(x,y)^{1+2\sigma}}d\mu(x) d\mu(y)
=&\iint_{Q\times (X\setminus Q)}\frac{h(x) \widetilde{h}(x)}{\delta(x,y)^{1+2\sigma}}d\mu(x) d\mu(y)\\
& + \iint_{(X\setminus Q)\times Q}\frac{h(y)\widetilde{h}(y)}{\delta(x,y)^{1+2\sigma}}d\mu(x) d\mu(y)\\
& + \iint_{Q\times Q}\frac{\Pi_{h \widetilde{h}} (x,y)}{\delta(x,y)^{1+2\sigma}}d\mu(x) d\mu(y)\\
=& \iint_{Q\times Q}\frac{\Pi_{h \widetilde{h}} (x,y)}{\delta(x,y)^{1+2\sigma}}d\mu(x) d\mu(y),
\end{align*}
since for the first term $\delta(x,y)$ is constant as a function of $x\in Q$ for $y$ fixed in $X\setminus Q$, for the second $\delta(x,y)$ is constant as a function of $y\in Q$ for $x$ fixed in $X\setminus Q$ and $h$ and $\widetilde{h}$ are orthogonal. Let us prove that $\iint_{Q\times Q}\frac{\Pi_{h \widetilde{h}} (x,y)}{\delta(x,y)^{1+2\sigma}}d\mu(x) d\mu(y)=0$.
Since $h$ and $\widetilde{h}$ are constant on $Q'\in\vartheta(Q)$ we have that $\Pi_{h \widetilde{h}}(x,y)=0$ for $(x,y)\in Q'\times Q'$, hence
\begin{align}\label{eq:Pionequalcube}
\iint_{Q\times Q}\frac{\Pi_{h \widetilde{h}} (x,y)}{\delta(x,y)^{1+2\sigma}}d\mu(x) d\mu(y)\notag&\\
&\hspace{-3truecm}=\sum_{Q'\in \vartheta(Q)}\sum_{Q''\in \vartheta(Q)}\int_{Q''}\int_{Q'}\frac{\Pi_{h \widetilde{h}} (x,y)}{\delta(x,y)^{1+2\sigma}} d\mu(x) d\mu(y)\notag\\
&\hspace{-3truecm}= \sum_{Q'\in \vartheta(Q)}\int_{Q'}\int_{Q'}\frac{\Pi_{h \widetilde{h}} (x,y)}{\delta(x,y)^{1+2\sigma}}d\mu(x) d\mu(y) \notag\\
 &  +  \sum_{Q'\in \vartheta(Q)}\sum_{\substack{Q''\in \vartheta(Q)\\Q'\neq Q''}}\int_{Q''}\int_{Q'}\frac{\Pi_{h \widetilde{h}} (x,y)}{\mu(Q)^{1+2\sigma}}d\mu(x) d\mu(y)\notag\\
&\hspace{-3truecm}= \mu(Q)^{-1-2\sigma}\sum_{Q''\in\vartheta(Q)}\int_{Q''}\left(\sum_{Q'\in\vartheta(Q)}\int_{Q'}\Pi_{h \widetilde{h}} (x,y)d\mu(x)\right) d\mu(y)\notag\\
&\hspace{-3truecm}= \mu(Q)^{-1-2\sigma}\int_{Q} \int_{Q} [h(x)\widetilde{h}(x)-h(y)\widetilde{h}(x) \notag\\
& \hspace*{1truecm}-h(x)\widetilde{h}(y)+h(y)\widetilde{h}(y)] d\mu(x)d\mu(y)\\
&\hspace{-3truecm}= 0. \notag
\end{align}

Let us now consider the case (ii), that is $Q\cap \widetilde{Q}=\emptyset$. In this case $\Pi_{h \widetilde{h}}(x,y)$ is supported in $(\widetilde{Q}\times Q)\cup(Q\times \widetilde{Q})$. Moreover on $\widetilde{Q}\times Q$ we have $\Pi_{h \widetilde{h}}(x,y)=-h(y) \widetilde{h}(x)$ and on $Q\times \widetilde{Q}$, $\Pi_{h \widetilde{h}}(x,y)=-h(x) \widetilde{h}(y)$. Since, on the other hand $\delta(x,y)=\delta(Q,\widetilde{Q})$ which is a positive constant on the support of $\Pi_{h \widetilde{h}}$, we get
\begin{align*}
&\iint_{X\times X}\frac{\Pi_{h \widetilde{h}} (x,y)}{\delta(x,y)^{1+2\sigma}}d\mu(x) d\mu(y)\\
&= -\frac{1}{\delta(Q,\widetilde{Q})^{1+2\sigma}}\left\{\iint_{\widetilde{Q}\times Q}[h(y)\widetilde{h}(x)] d\mu(x)d\mu(y)
+ \iint_{Q\times \widetilde{Q}}[h(x)\widetilde{h}(y)] d\mu(x)d\mu(y)\right\}\\
&=0.
\end{align*}
Consider now the case (iii). Since $Q\varsubsetneqq \widetilde{Q}$, then $\widetilde{h}$ is constant on $Q$, hence
\begin{equation*}
\iint_{X\times X}\frac{\Pi_{h \widetilde{h}} (x,y)}{\delta(x,y)^{1+2\sigma}}d\mu(x) d\mu(y) = \iint_{(X\setminus Q)\times Q} + \iint_{Q\times Q} + \iint_{Q\times (X\setminus Q)}.
\end{equation*}
Since $\widetilde{h}$ is constant on $Q$, $\Pi_{h \widetilde{h}}$ is identically zero on $Q\times Q$ and the second term vanishes. For the first term notice that it can be written as
\begin{align*}
\int_{X\setminus Q}&\left(\int_{Q}\frac{(-h(y))(\widetilde{h}(x)-\widetilde{h}(y_Q))}{\delta(Q,x)^{1+2\sigma}}d\mu(y)\right) d\mu(x)\\
&=-\int_{X\setminus Q}\frac{\widetilde{h}(x)-\widetilde{h}(y_Q)}{\delta(Q,x)^{1+2\sigma}}d\mu(x)\left(\int_Q h(y) d\mu(y)\right)\\
&=0,
\end{align*}
here $y_Q$ denotes any fixed point in $Q$. For the third term we have similarly
\begin{align*}
\iint_{Q\times (X\setminus Q)}&\frac{\Pi_{h \widetilde{h}} (x,y)}{\delta(x,y)^{1+2\sigma}}d\mu(x) d\mu(y)\\
&= \int_{X\setminus Q}\left(\int_{Q} \frac{(h(x))(\widetilde{h}(x_Q)-\widetilde{h}(y))}{\delta(Q,y)^{1+2\sigma}}d\mu(x)\right) d\mu(y) \\
&=0.
\end{align*}

Finally we have to show that $\nu(h,h)\simeq \mu(Q(h))^{-2\sigma}$. Let $Q=Q(h)$. Notice first for $(x,y)\in (X\setminus Q)\times (X\setminus Q)$ we have
$\Pi_{h h} (x,y)=0$. Hence
\begin{align*}
\iint_{X\times X}&\frac{\Pi_{h h} (x,y)}{\delta(x,y)^{1+2\sigma}}d\mu(x) d\mu(y)\\
&= \iint_{(X\setminus Q)\times Q}\frac{\Pi_{h h} (x,y)}{\delta(x,y)^{1+2\sigma}}d\mu(x) d\mu(y) +
\iint_{Q\times Q}\frac{\Pi_{h h} (x,y)}{\delta(x,y)^{1+2\sigma}}d\mu(x) d\mu(y)\\
& \phantom{\iint_{(X\setminus Q)\times Q}\frac{\Pi_{h h} (x,y)}{\delta(x,y)^{1+2\sigma}}}+ \iint_{Q\times (X\setminus Q)}\frac{\Pi_{h h} (x,y)}{\delta(x,y)^{1+2\sigma}}d\mu(x) d\mu(y)\\
&= 2\int_{X\setminus Q}\left(\int_{Q} \frac{\Pi_{h h} (x,y)}{\delta(x,y)^{1+2\sigma}}d\mu(x)\right)d\mu(y) + \iint_{Q\times Q}\frac{\Pi_{h h} (x,y)}{\delta(x,y)^{1+2\sigma}}d\mu(x) d\mu(y)\\
&= 2I + II.
\end{align*}
Let us first get an estimate for $I$. Notice that for any $x,z\in Q$ and $y\in X\setminus Q$, we have that $\delta(x,y)=\delta(z,y)$, hence $I=\int_{X\setminus Q}\delta(z,y)^{-(1+2\sigma)}d\mu(y)\int_Q\abs{h(x)}^2 d\mu(x)$, which is equivalent to $\mu(Q)^{-2\sigma}$ by Lemma~\ref{lem:integrals_of_delta}. To get the desired bound for $II$, we observe that equation \eqref{eq:Pionequalcube} holds for $h=\widetilde{h}$ also, then
\begin{align*}
\iint_{Q\times Q}\frac{\Pi_{h h} (x,y)}{\delta(x,y)^{1+2\sigma}}& d\mu(x) d\mu(y)\\
&=\mu(Q)^{-1-2\sigma}\iint_{Q\times Q}[h^2(x)+ h^2(y)-2h(x)h(y)] d\mu(x)d\mu(y)\\
&= 2\mu(Q)^{-2\sigma}.
\end{align*}
\end{proof}

For $Q\in \D$, set $\Ha_Q=\{h\in\Ha: Q(h)\subseteq Q\}$. Let $\mathcal{S}(\Ha_Q)$
denote the linear span of $\Ha_Q$. Since $\Ha_Q$ is countable we write $\ell^2$ to
denote the space $\ell^2(\Ha_Q)$ of all square summable complex sequences indexed
on $\Ha_Q$. On the other hand, consider the weighted space
$\mathcal{L}^2(Q\times Q):=L^2(Q\times Q, \frac{d\mu(x)d\mu(y)}{\delta(x,y)})$.
The next lemma shows that for $\varphi$ and $\psi$ in $\mathcal{S}(\Ha_Q)$
the inner product of $\frac{\varphi(x)-\varphi(y)}{\delta(x,y)^\sigma}$ with
$\frac{\psi(x)-\psi(y)}{\delta(x,y)^\sigma}$ in $\mathcal{L}^2(Q\times Q)$ is
equivalent to the inner product of $\frac{\proin{\varphi}{h}}{\mu(Q(h))^\sigma}$
with $\frac{\proin{\psi}{h}}{\mu(Q(h))^\sigma}$ in $\ell^2(\Ha_Q)$.

\begin{lemma}\label{lem:polarization}
Let $0<\sigma<1$ and $Q\in\D$ be given. For $\varphi$, $\psi$ two functions in $\mathcal{S}(\mathscr{H}_Q)$ we have that
\begin{equation*}
\iint_{Q\times Q}\frac{[\varphi(x)-\varphi(y)][\psi(x)-\psi(y)]}{\delta(x,y)^{1+2\sigma}} d\mu(x) d\mu(y)
=\sum_{h\in \mathscr{H}_Q}\proin{\varphi}{h}\proin{\psi}{h}\nu(h,h).
\end{equation*}
In particular,
\begin{equation*}
\iint_{Q\times Q}\frac{\abs{\varphi(x)-\varphi(y)}^2}{\delta(x,y)^{1+2\sigma}} d\mu(x) d\mu(y)\simeq\sum_{h\in \mathscr{H}_Q}\frac{\abs{\proin{\varphi}{h}}^2}{\mu(Q(h))^{2\sigma}}.
\end{equation*}
\end{lemma}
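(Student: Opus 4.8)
The plan is to exploit the bilinearity of the integral together with the orthogonality already recorded in Lemma~\ref{lem:equivalenceformintegrals}. Since $\varphi,\psi\in\mathcal{S}(\mathscr{H}_Q)$ are, by definition, \emph{finite} linear combinations of Haar functions based on subcubes of $Q$, I may write $\varphi=\sum_{h\in\mathscr{H}_Q}\proin{\varphi}{h}\,h$ and $\psi=\sum_{\widetilde h\in\mathscr{H}_Q}\proin{\psi}{\widetilde h}\,\widetilde h$, both sums having only finitely many nonzero terms. Consequently
\[
[\varphi(x)-\varphi(y)][\psi(x)-\psi(y)]=\sum_{h,\widetilde h\in\mathscr{H}_Q}\proin{\varphi}{h}\proin{\psi}{\widetilde h}\,[h(x)-h(y)][\widetilde h(x)-\widetilde h(y)],
\]
and, the sum being finite, I may freely interchange it with the integration against $\delta(x,y)^{-(1+2\sigma)}\,d\mu(x)\,d\mu(y)$. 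This turns the left-hand side into $\sum_{h,\widetilde h}\proin{\varphi}{h}\proin{\psi}{\widetilde h}\,\nu(h,\widetilde h)$, with $\nu$ the bilinear form of Lemma~\ref{lem:equivalenceformintegrals}, modulo the fact that the integration here runs over $Q\times Q$ rather than over the $X\times X$ built into the definition of $\nu$; this point is addressed in the last paragraph.

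Next I would invoke Lemma~\ref{lem:equivalenceformintegrals} directly: every off-diagonal coefficient $\nu(h,\widetilde h)$ with $h\neq\widetilde h$ vanishes, so the double sum collapses to $\sum_{h\in\mathscr{H}_Q}\proin{\varphi}{h}\proin{\psi}{h}\,\nu(h,h)$, which is the asserted identity. For the ``in particular'' statement I would set $\psi=\varphi$ and use the quantitative half of Lemma~\ref{lem:equivalenceformintegrals}, namely $\nu(h,h)\simeq\mu(Q(h))^{-2\sigma}$; because the equivalence constants there depend only on the geometric constants of the space and not on $h$ or $Q$, they factor out of the sum uniformly, yielding
\[
\iint_{Q\times Q}\frac{\abs{\varphi(x)-\varphi(y)}^2}{\delta(x,y)^{1+2\sigma}}\,d\mu(x)\,d\mu(y)\simeq\sum_{h\in\mathscr{H}_Q}\frac{\abs{\proin{\varphi}{h}}^2}{\mu(Q(h))^{2\sigma}}.
\]

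I expect the only genuine subtlety to be reconciling the domain $Q\times Q$ of the stated integral with the domain $X\times X$ used to define $\nu$. Since each $h\in\mathscr{H}_Q$ is supported in $Q$, the product $[h(x)-h(y)][\widetilde h(x)-\widetilde h(y)]$ vanishes on $(X\setminus Q)\times(X\setminus Q)$, so the two integrals differ only through the mixed regions $Q\times(X\setminus Q)$ and $(X\setminus Q)\times Q$. On each of these one argument of every Haar function is annihilated, the product collapses to $h(x)\widetilde h(x)$ (respectively $h(y)\widetilde h(y)$), and $\delta(x,y)=\delta(Q,y)$ is constant in the variable that ranges over $Q$; by Lemma~\ref{lem:integrals_of_delta} the resulting inner integral is a multiple of $\mu(Q)^{-2\sigma}$, while $\int_Q h\widetilde h\,d\mu=\proin{h}{\widetilde h}$ by orthonormality. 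Thus the boundary contributions vanish off the diagonal, and on the diagonal they add at most a term bounded by $\mu(Q)^{-2\sigma}\sum_{h}\abs{\proin{\varphi}{h}}^2\le\sum_{h}\mu(Q(h))^{-2\sigma}\abs{\proin{\varphi}{h}}^2$, which leaves the asserted equivalence intact. The cleanest route to the exact identity is therefore to carry out the bilinear expansion over $X\times X$, where no boundary terms appear, and then transfer to $Q\times Q$ through this support analysis; the computation is a direct rerun of the three-case argument already used for Lemma~\ref{lem:equivalenceformintegrals}, so it needs no new idea beyond this bookkeeping.
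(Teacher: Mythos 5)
Your route is the same as the paper's --- bilinear expansion of $[\varphi(x)-\varphi(y)][\psi(x)-\psi(y)]$, interchange of the finite sum with the integral, and removal of the cross terms by orthogonality --- and you are right to single out the mismatch between the domain $Q\times Q$ in the statement and the domain $X\times X$ in the definition of $\nu$: the paper's own proof glosses over precisely this point when, in its last line, it replaces $\iint_{Q\times Q}[h(x)-h(y)]^2\delta(x,y)^{-(1+2\sigma)}\,d\mu\,d\mu$ by $\nu(h,h)$. The gap is in your proposed repair. Your own boundary analysis gives, for $\varphi,\psi$ supported in $Q$,
\begin{align*}
\iint_{X\times X}\frac{[\varphi(x)-\varphi(y)][\psi(x)-\psi(y)]}{\delta(x,y)^{1+2\sigma}}\,d\mu(x)\,d\mu(y)
&=\iint_{Q\times Q}\frac{[\varphi(x)-\varphi(y)][\psi(x)-\psi(y)]}{\delta(x,y)^{1+2\sigma}}\,d\mu(x)\,d\mu(y)\\
&\qquad +2c_Q\int_{Q}\varphi\psi\,d\mu,
\end{align*}
where $c_Q=\int_{X\setminus Q}\delta(Q,y)^{-(1+2\sigma)}d\mu(y)>0$ whenever $\mu(X\setminus Q)>0$. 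The correction does not vanish when $\varphi=\psi\neq 0$, so ``expanding over $X\times X$ and transferring to $Q\times Q$'' cannot yield the exact identity; on the contrary, it shows the identity fails as literally stated: taking $\varphi=\psi=h$ with $Q(h)=Q$, the left-hand side of the lemma equals $2\mu(Q)^{-2\sigma}$ (the term $II$ computed in the proof of Lemma~\ref{lem:equivalenceformintegrals}), while the right-hand side is $\nu(h,h)=2\mu(Q)^{-2\sigma}+2c_Q$. What your expansion over $Q\times Q$ does prove --- the cross terms genuinely vanish there, by constancy of $\delta(\cdot,y)$ on $Q$ plus orthonormality, exactly as you argue --- is the identity with $\nu_Q(h,h):=\iint_{Q\times Q}[h(x)-h(y)]^2\delta(x,y)^{-(1+2\sigma)}\,d\mu\,d\mu$ in place of $\nu(h,h)$. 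That corrected identity is what Lemma~\ref{lem:polarizationextended} and the duality step in the proof of Theorem~\ref{thm:characterizationBesov} actually need; the paper's proof silently commits the same conflation of $\nu_Q(h,h)$ with $\nu(h,h)$, so the defect you ran into is inherited from the paper, not introduced by you --- but your claim that the exact identity survives the transfer is contradicted by your own computation.

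There is a second, quantitative, gap in your deduction of the ``in particular'' equivalence: knowing only that the diagonal correction is at most $C\sum_{h}\mu(Q(h))^{-2\sigma}\abs{\proin{\varphi}{h}}^2$ does not preserve the lower bound, because you cannot subtract a constant multiple of a sum from a quantity that is merely comparable to that sum and keep a lower bound when the constants are untracked. The fix is termwise rather than global: since the integrand is nonnegative,
\begin{equation*}
\nu_Q(h,h)\;\geq\;\iint_{Q(h)\times Q(h)}\frac{[h(x)-h(y)]^2}{\delta(x,y)^{1+2\sigma}}\,d\mu(x)\,d\mu(y)\;=\;2\mu(Q(h))^{-2\sigma},
\end{equation*}
again by the computation of $II$ in Lemma~\ref{lem:equivalenceformintegrals}, while on the other side $\nu_Q(h,h)\leq\nu(h,h)\leq C\mu(Q(h))^{-2\sigma}$. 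Hence $\nu_Q(h,h)\simeq\mu(Q(h))^{-2\sigma}$ uniformly in $h$ and $Q$, and the equivalence follows at once from the corrected identity. With these two adjustments --- state and use the identity with $\nu_Q$, and justify the equivalence termwise --- your argument becomes a complete proof of the lemma in the form in which it is true and in which the rest of the paper uses it.
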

\begin{proof}
Since $\varphi$ and $\psi$ are in the linear span of $\mathscr{H}_Q$ we easily see that
\begin{equation*}
[\varphi(x)-\varphi(y)][\psi(x)-\psi(y)]= \sum_{h\in \mathscr{H}_Q}\sum_{\widetilde{h}\in\mathscr{H}_Q}\proin{\varphi}{h}\proin{\psi}{\widetilde{h}}[h(x)-h(y)][\widetilde{h}(x)-\widetilde{h}(y)]
\end{equation*}
Dividing both members of the above equation by $\delta(x,y)^{1+2\sigma}$, integrating on the  product space $Q\times Q$ and then applying Lemma \ref{lem:equivalenceformintegrals} we get
\begin{align*}
\iint_{Q\times Q}&\frac{[\varphi(x)-\varphi(y)][\psi(x)-\psi(y)]}{\delta(x,y)^{1+2\sigma}} d\mu(x) d\mu(y)\\
&=\sum_{h\in \mathscr{H}_Q}\sum_{\widetilde{h}\in\mathscr{H}_Q}\proin{\varphi}{h}\proin{\psi}{\widetilde{h}}\iint_{Q\times Q}\frac{(h(x)-h(y))(\widetilde{h}(x)-\widetilde{h}(y))}{\delta(x,y)^{1+2\sigma}} d\mu(x) d\mu(y) \\
&=\sum_{h\in \mathscr{H}_Q}\proin{\varphi}{h}\proin{\psi}{h}\iint_{Q\times Q}\frac{[h(x)-h(y)]^2}{\delta(x,y)^{1+2\sigma}} d\mu(x) d\mu(y)\\
&=\sum_{h\in \mathscr{H}_Q}\proin{\varphi}{h}\proin{\psi}{h}\nu(h,h).
\end{align*}
\end{proof}

\begin{lemma}\label{lem:diferenciaentornodiagonal}
For $\psi\in\mathcal{S}(\mathscr{H})$ there exists $\ep>0$ such that $\psi(x)-\psi(y)$ vanishes on $\Delta_{\epsilon}=\set{(x,y)\in X\times X: \delta(x,y)<\ep}$.
\end{lemma}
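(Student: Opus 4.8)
The plan is to reduce the statement to a single Haar function and then play the nested structure of $\D$ against the very definition of $\delta$. Write $\psi=\sum_{j=1}^{N}c_j h_j$ as a \emph{finite} linear combination of elements of $\Ha$, and set
\[
\ep:=\min_{1\le j\le N}\mu(Q(h_j)).
\]
Since there are finitely many terms and every dyadic cube has strictly positive measure, $\ep>0$. I claim this $\ep$ works, so it suffices to prove that $h_j(x)=h_j(y)$ for each $j$ whenever $\delta(x,y)<\ep$; summing against the coefficients $c_j$ then yields $\psi(x)=\psi(y)$.

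First I would fix a pair $(x,y)\in\Delta_\ep$. By the definition of $\delta$ (recall that $X$ is a quadrant, so $x$ and $y$ have a common ancestor and the minimum is attained) there is a cube $Q^*\in\D$ with $x,y\in Q^*$ and $\mu(Q^*)=\delta(x,y)<\ep$. Now fix $j$ and put $Q:=Q(h_j)$. Because $\mu(Q^*)<\ep\le\mu(Q)$ and the family $\D$ is nested---any two dyadic cubes are either disjoint or one contains the other---the alternative $Q\subseteq Q^*$ is impossible, as it would force $\mu(Q)\le\mu(Q^*)$. Hence only two configurations remain: either $Q^*\cap Q=\emptyset$, or $Q^*\subsetneq Q$.

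In the disjoint case both $x$ and $y$ lie outside $Q$, where $h_j$ vanishes, so $h_j(x)=h_j(y)=0$. In the case $Q^*\subsetneq Q$, the cube $Q^*$ is contained in a single offspring $Q'\in\vartheta(Q)$ (the unique child of $Q$ meeting, hence containing, $Q^*$), so that $x,y\in Q'$; since $h_j$ is constant on each element of $\vartheta(Q)$, again $h_j(x)=h_j(y)$. This proves the claim for every $j$, hence $\psi(x)=\psi(y)$ throughout $\Delta_\ep$, as desired. The only points that require care are structural rather than quantitative: that the minimum defining $\delta$ is genuinely attained so that a cube $Q^*$ is available, and the trichotomy of relative positions forced by dyadic nesting. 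All the metric content is absorbed into the single inequality $\mu(Q^*)<\mu(Q)$, which is exactly what makes the threshold $\ep$ explicit and independent of $(x,y)$.
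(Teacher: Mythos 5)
Your proof is correct and takes essentially the same route as the paper's: reduce to a single Haar function $h$, use that $h$ is constant on each child of $Q(h)$ and vanishes off $Q(h)$, and conclude that $h(x)-h(y)=0$ whenever $\delta(x,y)<\mu(Q(h))$, then take the minimum over the finitely many $h$'s appearing in $\psi$. The only difference is presentational: you make explicit the dyadic trichotomy (the minimal common cube $Q^*$ is either disjoint from $Q(h)$ or strictly contained in a single child) that the paper compresses into its final ``Hence.''
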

\begin{proof}
It is enough to check the result for $\psi=h\in\mathscr{H}$. But since $h$ is
constant on each child $Q'$ of $Q(h)$ we have that $h(x)-h(y)=0$ on
$\bigcup_{Q' \textrm{ child of }Q} Q'\times Q'$. On the other hand,
$h(x)-h(y)=0$ for $x$ and $y$ both outside $Q(h)$. Hence $h(x)-h(y)$ vanishes
on $\set{(x,y): \delta(x,y)<\mu(Q(h))}$.
\end{proof}

The next result shows that Lemma~\ref{lem:polarization} extends to the case of $\varphi$ in $L^2$.
\begin{lemma}\label{lem:polarizationextended}
Let $Q$ be a cube in $\D$, $f\in L^2(Q,\mu)$ and $\psi\in \mathcal{S}(\mathscr{H}_Q)$. Then
\begin{equation*}
\iint_{Q\times Q}\frac{(f(x)-f(y))(\psi(x)-\psi(y))}{\delta(x,y)^{1+2\sigma}} d\mu(x) d\mu(y)
=\sum_{h\in \mathscr{H}_Q}\proin{f}{h}\proin{\psi}{h}\nu(h,h).
\end{equation*}
\end{lemma}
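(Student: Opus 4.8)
The plan is to regard each side of the claimed identity as a continuous linear functional of $f$ on $L^2(Q,\mu)$ and then to match the two functionals on the dense subspace $\mathcal{S}(\mathscr{H}_Q)$, where the identity is already known from Lemma~\ref{lem:polarization}. Fix $\psi\in\mathcal{S}(\mathscr{H}_Q)$ and set
\[
L(f)=\iint_{Q\times Q}\frac{(f(x)-f(y))(\psi(x)-\psi(y))}{\delta(x,y)^{1+2\sigma}}\,d\mu(x)\,d\mu(y),
\qquad
R(f)=\sum_{h\in\mathscr{H}_Q}\proin{f}{h}\proin{\psi}{h}\,\nu(h,h).
\]
Both are linear in $f$, and the goal is to establish $L=R$ on all of $L^2(Q,\mu)$.

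First I would verify that both functionals are well defined and continuous on $L^2(Q,\mu)$. For $R$ this is immediate: since $\psi\in\mathcal{S}(\mathscr{H}_Q)$ has only finitely many nonzero Haar coefficients, $R(f)$ is a finite sum of terms $\proin{f}{h}\proin{\psi}{h}\nu(h,h)$, each continuous in $f$. For $L$ the essential point is that the singularity of $\delta^{-(1+2\sigma)}$ at the diagonal is harmless: by Lemma~\ref{lem:diferenciaentornodiagonal} there is $\ep>0$ with $\psi(x)-\psi(y)=0$ on $\Delta_\ep=\set{(x,y):\delta(x,y)<\ep}$, so the integrand is supported where $\delta(x,y)\geq\ep$, and there $\delta(x,y)^{-(1+2\sigma)}\leq\ep^{-(1+2\sigma)}$. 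Bounding $\abs{\psi(x)-\psi(y)}\leq 2\norm{\psi}_{\infty}$ (finite, as $\psi$ is a finite combination of bounded Haar functions) and using $\iint_{Q\times Q}\abs{f(x)-f(y)}\,d\mu(x)\,d\mu(y)\leq 2\mu(Q)^{3/2}\norm{f}_{L^2(Q)}$, one gets $\abs{L(f)}\leq C(\psi,\ep,Q)\norm{f}_{L^2(Q)}$, so $L$ is bounded.

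Next I would observe that both functionals annihilate constants: if $f$ is constant on $Q$ then $f(x)-f(y)\equiv 0$ kills $L(f)$, while $\proin{f}{h}=0$ for every $h\in\mathscr{H}_Q$ kills $R(f)$. Writing $f=f_0+\bar f$ with $\bar f=\mu(Q)^{-1}\int_Q f\,d\mu$ and $f_0\in L^2_0(Q):=\set{g\in L^2(Q):\int_Q g\,d\mu=0}$, it therefore suffices to prove $L(f_0)=R(f_0)$. Now $\mathscr{H}_Q$ is an orthonormal basis of $L^2_0(Q)$ (the standard localization of the Haar system to $Q$), so the partial sums $\varphi_N$ of the Haar expansion of $f_0$ lie in $\mathcal{S}(\mathscr{H}_Q)$ and converge to $f_0$ in $L^2(Q,\mu)$. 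By Lemma~\ref{lem:polarization} we have $L(\varphi_N)=R(\varphi_N)$ for every $N$, and letting $N\to\infty$ the continuity of $L$ and $R$ yields $L(f_0)=R(f_0)$, hence $L(f)=R(f)$.

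I expect the only genuine obstacle to be the continuity of $L$: one must notice that although $f$ is merely in $L^2$ and $\delta^{-(1+2\sigma)}$ is non-integrable near the diagonal, the factor $\psi(x)-\psi(y)$ coming from $\mathcal{S}(\mathscr{H}_Q)$ vanishes on an entire $\delta$-neighborhood of the diagonal by Lemma~\ref{lem:diferenciaentornodiagonal}, which is exactly what removes the singularity and turns $L$ into a bounded functional on $L^2(Q,\mu)$. Once that is in place, the remainder is a routine density-and-continuity argument anchored on Lemma~\ref{lem:polarization}.
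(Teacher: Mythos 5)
Your proof is correct and follows essentially the same route as the paper's: approximate $f$ by finite Haar sums, apply Lemma~\ref{lem:polarization} on the dense span $\mathcal{S}(\mathscr{H}_Q)$, and pass to the limit using that $\frac{\psi(x)-\psi(y)}{\delta(x,y)^{1+2\sigma}}$ is bounded on $Q\times Q$ by Lemma~\ref{lem:diferenciaentornodiagonal}. Your explicit splitting $f=f_0+\bar f$ merely makes precise a point the paper leaves implicit (its partial sums $f_n$ converge to $f-\bar f$, and the mean cancels in the differences $f_n(x)-f_n(y)$), so the two arguments coincide in substance.
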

\begin{proof}
Let $\mathscr{F}_n\nearrow\mathscr{H}_Q$ and $f_n=\sum_{h\in\mathscr{F}_n}\proin{f}{h}h$. Since each $f_n\in\mathcal{S}(\mathscr{H}_Q)$ we have from Lemma~\ref{lem:polarization} that
\begin{equation*}
\iint_{Q\times Q}\frac{[f_n(x)-f_n(y)][\psi(x)-\psi(y)]}{\delta(x,y)^{1+2\sigma}} d\mu(x) d\mu(y)
=\sum_{h\in \mathscr{H}_Q}\proin{f}{h}\proin{\psi}{h}\nu(h,h)
\end{equation*}
taking $n$ large enough such that $\mathscr{F}_n$ contains all the $h$'s building $\psi$. On the other hand, since, from Lemma~\ref{lem:diferenciaentornodiagonal}, the function $\frac{\psi(x)-\psi(y)}{\delta(x,y)^{1+2\sigma}}\in L^{\infty}(Q\times Q)$ and $f_n(x)-f_n(y)\to f(x)-f(y)$ in $L^2(Q\times Q)$, hence in $L^1(Q\times Q)$ we get the desired equality.
\end{proof}

The next result allow us to localize to dyadic cubes our characterization of the Besov space $B^{\sigma}_2(X,\delta,\mu)$ in terms of the Haar coefficients.
\begin{lemma}\label{lem:unioncubesmaximos}
The set
\begin{equation*}
\triangle_1=\set{(x,y)\in X\times X: \delta(x,y)<1}=\bigcup_{Q\in\mathscr{M}} Q\times Q,
\end{equation*}
where $\mathscr{M}$ is the family of all maximal dyadic cubes in $\D$ with $\mu(Q)<1$.
\end{lemma}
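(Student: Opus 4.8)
The plan is to prove the set identity by a double inclusion, the only genuine content being a partition property of the family $\mathscr{M}$. I would first record the structural facts about the Christ family that I will use: any two dyadic cubes are either disjoint or comparable, along any descending chain of cubes the measures tend to $0$, and along any ascending chain of ancestors the measures strictly increase. Under the quadrant hypothesis every point of $X$ lies in a cube of arbitrarily small measure, and (when $\mu(X)=\infty$) in ancestors of arbitrarily large measure, while when $\mu(X)<\infty$ the chain of ancestors terminates at the top cube $X$. From these I would deduce that $\mathscr{M}$ is a partition of $X$: distinct members cannot be comparable by maximality, and nestedness forces any two meeting members to be comparable, which gives disjointness; and each $x\in X$ lies in a member, namely the largest ancestor of measure $<1$ of any sufficiently small cube through $x$. (Disjointness is not strictly needed for the set identity, but it clarifies the meaning of $\mathscr{M}$.)

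Granting the covering property, the inclusion $\bigcup_{Q\in\mathscr{M}} Q\times Q\subseteq\triangle_1$ is immediate: if $x,y\in Q$ with $Q\in\mathscr{M}$, then $Q$ is a dyadic cube containing both points with $\mu(Q)<1$, so by the very definition of $\delta$ we get $\delta(x,y)\le\mu(Q)<1$, i.e.\ $(x,y)\in\triangle_1$.

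For the reverse inclusion I would take $(x,y)$ with $\delta(x,y)<1$. If $x=y$, then $x$ lies in the unique member $Q$ of $\mathscr{M}$ through it by the covering property, so $(x,x)\in Q\times Q$. If $x\neq y$, the definition of $\delta$ provides a dyadic cube $Q_0$ containing both $x$ and $y$ with $\mu(Q_0)=\delta(x,y)<1$; I then pass to the maximal ancestor $Q$ of $Q_0$ with $\mu(Q)<1$, which belongs to $\mathscr{M}$ and satisfies $Q_0\subseteq Q$, whence $x,y\in Q$ and $(x,y)\in Q\times Q$.

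The main obstacle, and really the only nontrivial point, is the existence of these maximal cubes, that is, ruling out an infinite ascending chain of ancestors all of measure strictly below $1$. The hard part will be to argue cleanly that the measures of the ancestors of a fixed cube are unbounded when $\mu(X)=\infty$ (so the threshold $1$ is crossed), and that otherwise the chain terminates at $X$ (so either $X\in\mathscr{M}$ when $\mu(X)<1$, or a maximal proper ancestor of measure $<1$ exists); this is exactly where the quadrant hypothesis and the metric control of the Christ cubes enter. Once this is secured, both inclusions and the partition property are routine.
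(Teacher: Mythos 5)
The paper never proves this lemma: it is stated and then used as if evident, so there is no ``paper proof'' to compare against, and your argument must be judged on its own merits. On those merits your scheme is the natural one and its skeleton is correct: the inclusion $\bigcup_{Q\in\mathscr{M}}Q\times Q\subseteq\triangle_1$ is immediate from the definition of $\delta$, the reverse inclusion for $x\neq y$ reduces to passing from the minimizing cube $Q_0$ (with $\mu(Q_0)=\delta(x,y)<1$) to a maximal ancestor of measure $<1$, and you correctly isolate the existence of such maximal ancestors as the only genuine content.

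You left that key step as a declared ``hard part,'' so let me close it, and also flag two of your recorded facts that need amending. Fix $x_0\in Q_0$ and let $Q_j\in\D^j$, $j\to-\infty$, be the ancestors of $Q_0$. Christ's construction provides centers $z_j\in Q_j$ and radii $r_j\to\infty$ with $B(z_j,a_0r_j)\subseteq Q_j$ and $\diam Q_j\leq C_1r_j$. Since $x_0,z_j\in Q_j$ gives $d(x_0,z_j)\leq C_1r_j$, we get $B(x_0,C_1r_j)\subseteq B(z_j,2C_1r_j)$, and doubling (iterated a number of times depending only on $a_0,C_1$) yields
\begin{equation*}
\mu(Q_j)\geq\mu\bigl(B(z_j,a_0r_j)\bigr)\geq c\,\mu\bigl(B(z_j,2C_1r_j)\bigr)\geq c\,\mu\bigl(B(x_0,C_1r_j)\bigr)\longrightarrow c\,\mu(X).
\end{equation*}
So if $\mu(X)=\infty$ the ancestor measures are unbounded and the threshold $1$ is crossed; if $\mu(X)<\infty$ then $X$ is bounded, $B(z_j,a_0r_j)=X$ for $j$ negative enough, and the chain terminates at the top cube $X$. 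In either case, since measures are non-decreasing along the chain and (by nestedness) any cube strictly containing a given cube is one of its ancestors, the last ancestor of measure $<1$ belongs to $\mathscr{M}$, as required. As for the amendments: first, measures along an ascending chain are only non-decreasing, not strictly increasing (a Christ cube may coincide with its parent); this is harmless since you never use strictness. Second, ``every point lies in cubes of arbitrarily small measure'' is not a consequence of the quadrant hypothesis: it fails at an atom of mass $\geq1$, where the stated identity itself breaks on the diagonal (a one-point space of mass $1$ has $\triangle_1=\{(x,x)\}$ but $\mathscr{M}=\emptyset$), and for Christ cubes the covering of $X$ at each scale holds only up to $\mu$-null sets. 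These degeneracies are invisible in the paper's application of the lemma (the integrands involved vanish on the diagonal), but your treatment of the case $x=y$ silently assumes them away, so it is worth stating the hypothesis explicitly.
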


The next result is a statement of the intuitive fact that the regularity requirement additional to the $L^2$ integrability involved in the definition of $B^{\sigma}_2(X,\delta,\mu)$ is only relevant around the diagonal of $X\times X$.

\begin{lemma}\label{lem:Besovdistancelessone}
For $0<\sigma<1$ an $L^2$ function $f$ belongs to $B^{\sigma}_2(X,\delta,\mu)$ if and only if
\begin{equation*}
\mathscr{E}(f)=\iint_{\delta(x,y)<1}\frac{\abs{f(x)-f(y)}^2}{\delta(x,y)^{1+2\sigma}}d\mu(x)d\mu(y)<\infty.
\end{equation*}
Moreover, the $B^{\sigma}_2(X,\delta,\mu)$ norm of $f$ is equivalent to $\norm{f}_2+[\mathscr{E}(f)]^{\tfrac{1}{2}}$.
\end{lemma}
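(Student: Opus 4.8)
The plan is to compare the full $B^\sigma_2(X,\delta,\mu)$ seminorm
\[
[f]^2:=\iint_{X\times X}\frac{\abs{f(x)-f(y)}^2}{\delta(x,y)^{1+2\sigma}}\,d\mu(x)\,d\mu(y)
\]
with its restriction $\mathscr{E}(f)$ to the near-diagonal region $\{\delta(x,y)<1\}$, by showing that the complementary far piece
\[
\mathscr{F}(f):=\iint_{\delta(x,y)\geq1}\frac{\abs{f(x)-f(y)}^2}{\delta(x,y)^{1+2\sigma}}\,d\mu(x)\,d\mu(y)
\]
is dominated by $\norm{f}_2^2$. Since $[f]^2=\mathscr{E}(f)+\mathscr{F}(f)$, the inequality $\mathscr{E}(f)\leq[f]^2$ is immediate, so the only direction with content is that $f\in L^2$ together with $\mathscr{E}(f)<\infty$ forces $\mathscr{F}(f)<\infty$; the asserted norm equivalence then follows at once from $\mathscr{F}(f)\lesssim\norm{f}_2^2$.

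To estimate $\mathscr{F}(f)$ I would first use $\abs{f(x)-f(y)}^2\leq2\abs{f(x)}^2+2\abs{f(y)}^2$ together with the symmetry of the kernel in $x,y$ to reduce matters to
\[
\mathscr{F}(f)\leq4\int_X\abs{f(x)}^2\,\Phi(x)\,d\mu(x),\qquad \Phi(x):=\int_{\{y:\,\delta(x,y)\geq1\}}\frac{d\mu(y)}{\delta(x,y)^{1+2\sigma}},
\]
so that everything hinges on a uniform bound $\sup_x\Phi(x)<\infty$. Here Lemma~\ref{lem:unioncubesmaximos} is the key geometric input: the slice at fixed $x$ of $\{\delta<1\}=\bigcup_{Q\in\mathscr{M}}Q\times Q$ is exactly the unique maximal cube $Q_x\in\mathscr{M}$ containing $x$, whence $\{y:\delta(x,y)\geq1\}=X\setminus Q_x$ and $\Phi(x)=\int_{X\setminus Q_x}\delta(x,y)^{-(1+2\sigma)}\,d\mu(y)$.

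Finally I would compute $\Phi(x)$ from the one-dimensional (annular) structure of $\delta$. Writing $Q_x=Q^{(0)}\subsetneq Q^{(1)}\subsetneq\cdots$ for the chain of $\D$-ancestors of $Q_x$ in the quadrant $X$, one has $\delta(x,y)=\mu(Q^{(k)})$ for every $y\in Q^{(k)}\setminus Q^{(k-1)}$, so with $a_k:=\mu(Q^{(k)})$,
\[
\Phi(x)=\sum_{k\geq1}\frac{a_k-a_{k-1}}{a_k^{1+2\sigma}}\leq\sum_{k\geq1}\int_{a_{k-1}}^{a_k}\frac{dt}{t^{1+2\sigma}}=\int_{a_0}^{\infty}\frac{dt}{t^{1+2\sigma}}=\frac{1}{2\sigma}\,\mu(Q_x)^{-2\sigma}.
\]
Since $Q_x$ is maximal among dyadic cubes of measure $<1$, its parent has measure $\geq1$, and the comparability of the measure of a cube with that of its parent in Christ's family gives $\mu(Q_x)\geq\eta$ for a fixed $\eta>0$; hence $\Phi(x)\leq\frac{1}{2\sigma}\eta^{-2\sigma}$ uniformly, and $\mathscr{F}(f)\lesssim\norm{f}_2^2$. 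The main obstacle is precisely this uniform control of $\Phi$: note that $2\sigma$ may exceed $1$, so Lemma~\ref{lem:integrals_of_delta} is not directly available, and the integral-comparison step above is what makes the bound uniform in $\sigma\in(0,1)$ while remaining robust to stretches of the ancestor chain along which the measure is constant (these contribute zero to the sum).
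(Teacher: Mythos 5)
Your proof is correct and structurally identical to the paper's: both split the Besov energy into $\mathscr{E}(f)$ plus the far part, bound the far part by $C\int_X\abs{f(x)}^2\Phi(x)\,d\mu(x)$ with $\Phi(x)=\int_{\{\delta(x,y)\ge1\}}\delta(x,y)^{-(1+2\sigma)}\,d\mu(y)$, and reduce everything to $\sup_x\Phi(x)<\infty$. The difference lies in how that supremum is handled. The paper dispatches it in one line by citing Lemma~\ref{lem:integrals_of_delta}, whereas you identify $\{y:\delta(x,y)\ge1\}=X\setminus Q_x$ via Lemma~\ref{lem:unioncubesmaximos} and compute the tail integral directly over ancestor annuli, dominating it by $\int_{\mu(Q_x)}^{\infty}t^{-(1+2\sigma)}\,dt=\tfrac{1}{2\sigma}\mu(Q_x)^{-2\sigma}$. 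Your route is more self-contained and, as you observe, actually needed in the range $\sigma\ge\tfrac12$, where the exponent $2\sigma$ falls outside the hypothesis $0<\ep<1$ of Lemma~\ref{lem:integrals_of_delta}; the paper's citation can alternatively be repaired by noting $\delta^{-(1+2\sigma)}\le\delta^{-(1+\sigma)}$ on $\{\delta\ge1\}$ and applying the lemma with $\ep=\sigma$. You also make explicit the uniformity ingredient the paper leaves silent: the bound $\Phi(x)\lesssim\mu(Q_x)^{-2\sigma}$ is uniform in $x$ only because $\mu(Q_x)\gtrsim1$, which you correctly extract from the maximality of $Q_x$ together with parent--child measure comparability of Christ cubes. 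The one tacit assumption you share with the paper (inherited from Lemma~\ref{lem:unioncubesmaximos}) is the existence of $Q_x$: if every dyadic ancestor of $x$ has measure $<1$, then $Q_x$ does not exist, but in that case the slice $\{y:\delta(x,y)\ge1\}$ is empty and $\Phi(x)=0$, so the argument goes through trivially.
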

\begin{proof}
The ``only if'' is obvious. Since
\begin{equation*}
\iint_{\delta(x,y)\geq 1}\frac{\abs{f(x)-f(y)}^2}{\delta(x,y)^{1+2\sigma}} d\mu(x) d\mu(y)
\leq C\int_{x\in X}\abs{f(x)}^2\left(\int_{\delta(x,y)\geq 1}\frac{d\mu(y)}{\delta(x,y)^{1+2\sigma}}\right) d\mu(x).
\end{equation*}
From Lemma~\ref{lem:integrals_of_delta} we have that the first term in the above inequality is bounded by the $L^2$ norm of $f$, as desired.
\end{proof}

The following result is elementary but useful.
\begin{lemma}\label{lem:equivalenceSquarefunction}
For an $L^2(X,\mu)$ function $f$ the quantities $\norm{f}^2_2 + \sum_{h\in\mathscr{H}}\frac{\abs{\proin{f}{h}}^2}{\mu(Q(h))^{2\sigma}}$ and
$\norm{f}^2_2 + \sum_{\substack{h\in\mathscr{H}\\ \mu(Q(h))<1}}\frac{\abs{\proin{f}{h}}^2}{\mu(Q(h))^{2\sigma}}$ are equivalent.
\end{lemma}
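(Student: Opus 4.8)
The plan is to prove the two comparisons separately; only one direction requires any work. Write $S(f) = \sum_{h\in\mathscr{H}} \frac{\abs{\proin{f}{h}}^2}{\mu(Q(h))^{2\sigma}}$ for the full Haar-coefficient sum and $S_{<1}(f) = \sum_{h\in\mathscr{H},\, \mu(Q(h))<1} \frac{\abs{\proin{f}{h}}^2}{\mu(Q(h))^{2\sigma}}$ for the truncated one. Since every term is nonnegative and $S_{<1}(f)$ is obtained from $S(f)$ by discarding exactly those indices $h$ with $\mu(Q(h))\ge 1$, we immediately get $\norm{f}_2^2 + S_{<1}(f) \le \norm{f}_2^2 + S(f)$, which is one of the two inequalities, with constant $1$.

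For the reverse inequality I would estimate the discarded tail $T(f) := \sum_{h:\,\mu(Q(h))\ge1} \frac{\abs{\proin{f}{h}}^2}{\mu(Q(h))^{2\sigma}}$ directly. The key elementary observation is that when $\mu(Q(h))\ge 1$ and $\sigma>0$ one has $\mu(Q(h))^{2\sigma}\ge 1$, so each weight $\mu(Q(h))^{-2\sigma}$ is bounded by $1$ on this portion of the index set. Hence $T(f) \le \sum_{h:\,\mu(Q(h))\ge1} \abs{\proin{f}{h}}^2 \le \sum_{h\in\mathscr{H}}\abs{\proin{f}{h}}^2$. Because $\mathscr{H}$ is an orthonormal system in $L^2(X,\mu)$ (indeed an orthonormal basis of $L^2_0$), Bessel's inequality gives $\sum_{h\in\mathscr{H}}\abs{\proin{f}{h}}^2 \le \norm{f}_2^2$, and therefore $T(f)\le \norm{f}_2^2$.

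Combining the two, $\norm{f}_2^2 + S(f) = \norm{f}_2^2 + S_{<1}(f) + T(f) \le 2\norm{f}_2^2 + S_{<1}(f) \le 2\bigl(\norm{f}_2^2 + S_{<1}(f)\bigr)$, which is the reverse comparison with constant $2$. Together these establish the claimed equivalence. I do not expect any genuine obstacle here: the only point to notice is that the large-cube weights are uniformly bounded by $1$, so the entire tail over $\{\mu(Q(h))\ge1\}$ is absorbed into the $L^2$ norm through Bessel's inequality, and this works irrespective of whether there are finitely or infinitely many cubes of measure at least $1$.
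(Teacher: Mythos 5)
Your proof is correct: the paper itself offers no proof of this lemma (it is stated as ``elementary but useful''), and your argument --- dropping terms for one direction, and for the other bounding the weights $\mu(Q(h))^{-2\sigma}\le 1$ on the cubes with $\mu(Q(h))\ge 1$ so that the discarded tail is absorbed into $\norm{f}_2^2$ via Bessel's inequality --- is precisely the intended elementary argument, valid since $\sigma>0$ and $\mathscr{H}$ is orthonormal in $L^2(X,\mu)$.
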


The characterization  of $B^{\sigma}_2(X,\delta,\mu)$ in terms of the Haar system is just a completation argument built on the result in Lemma \ref{lem:equivalenceformintegrals}.

\begin{proof}[Proof of Theorem~\ref{thm:characterizationBesov}]
From Lemmas~\ref{lem:diferenciaentornodiagonal}, \ref{lem:polarizationextended} and \ref{lem:unioncubesmaximos} it is enough to prove that that quantities
\begin{equation}\label{eq:integralform}
\iint_{Q\times Q}\frac{\abs{f(x)-f(y)}^2}{\delta(x,y)^{1+2\sigma}} d\mu(x) d\mu(y)
 \end{equation}
 and
\begin{equation}\label{eq:sumform}
\sum_{h\in\mathscr{H}_Q}\frac{\abs{\proin{f}{h}}^2}{\mu(Q(h))^{2\sigma}}
\end{equation}
are equivalent for each cube $Q$ in $\mathscr{M}$ with constants independent of $Q$.

Let us start by showing that the double integral in~\eqref{eq:integralform} is
bounded by the sum~\eqref{eq:sumform}. Let $(\mathscr{F}_n)$ be an increasing
sequence of finite subfamilies of $\mathscr{H}_Q$ that covers $\mathscr{H}_Q$.
In other words $\mathscr{F}_n\subseteq \mathscr{F}_{n+1}$ and
$\mathscr{H}_Q=\bigcup_n \mathscr{F}_n$.
Let $f_n=\sum_{h\in \mathscr{F}_n}\proin{f\chi_Q}{h}h$. Since $f_n$ converge
pointwise almost everywhere for $x$ in $Q$ to $f(x)$, then we have that
$\abs{f_n(x)-f_n(y)}\to \abs{f(x)-f(y)}$ as $n\to \infty$ for $y\in Q$ also.
Hence from Fatou's lemma and Lemma~\ref{lem:polarization}
\begin{align*}
\iint_{Q\times Q}\frac{\abs{f(x)-f(y)}^2}{\delta(x,y)^{1+2\sigma}} d\mu(x) d\mu(y)
&=\iint_{Q\times Q}\lim_{n\to\infty}\frac{\abs{f_n(x)-f_n(y)}^2}{\delta(x,y)^{1+2\sigma}} d\mu(x) d\mu(y)\\
& \leq \liminf_{n\to\infty}\iint_{Q\times Q}\frac{\abs{f_n(x)-f_n(y)}^2}{\delta(x,y)^{1+2\sigma}} d\mu(x) d\mu(y)\\
&\leq C \liminf_{n\to\infty}\sum_{h\in \mathscr{H}_Q}\frac{\abs{\proin{f_n}{h}}^2}{\mu(Q(h))^{2\sigma}} \\
&\leq C \sum_{h\in \mathscr{H}_Q}\frac{\abs{\proin{f}{h}}^2}{\mu(Q(h))^{2\sigma}}.
\end{align*}

To prove the converse we shall argue by duality. Take
$\boldmath{b}= \set{b_h: h\in \mathscr{H}_Q}$ a sequence of scalars indexed
on $\mathscr{H}_Q$ with $b_h=0$ except for a finite number of $h\in\mathscr{H}_Q$.
Assume that $\sum_{h\in\mathscr{H}_Q}\abs{b_h}^2\leq 1$. Set
$\psi= \sum_{h\in\mathscr{H}_Q}b_h\nu_(h,h)^{-1/2}h \in \mathcal{S}(\mathscr{H}_Q)$.
So that from Lemma~\ref{lem:polarizationextended},

\begin{align*}
\sum_{h\in \mathscr{H}_Q}\proin{f}{h}\nu(h,h)^{\tfrac{1}{2}} b_h
= \sum_{h\in \mathscr{H}_Q}\proin{f}{h}\proin{\psi}{h}\nu(h,h)&\\
& \hspace*{-7.5truecm}=\iint_{Q\times Q}\frac{(f(x)-f(y))(\psi(x)-\psi(y))}{\delta(x,y)^{1+2\sigma}} d\mu(x) d\mu(y)\\
& \hspace*{-7.5truecm}\leq \left(\iint_{Q\times Q}\frac{\abs{f(x)-f(y)}^2}{\delta(x,y)^{1+2\sigma}} d\mu(x) d\mu(y)\right)^{\tfrac{1}{2}}
\left(\iint_{Q\times Q}\frac{\abs{\psi(x)-\psi(y)}^2}{\delta(x,y)^{1+2\sigma}} d\mu(x) d\mu(y)\right)^{\tfrac{1}{2}}.
\end{align*}
Notice that from Lemma~\ref{lem:polarization} and Lemma~\ref{lem:equivalenceformintegrals} we have,
\begin{align*}
\iint_{Q\times Q}\frac{\abs{\psi(x)-\psi(y)}^2}{\delta(x,y)^{1+2\sigma}} d\mu(x) d\mu(y)
&\leq C \sum_{h\in \mathscr{H}_Q}\frac{\abs{\proin{\psi}{h}}^2}{\mu(Q(h))^{2\sigma}}\\
&\leq C \sum_{h\in \mathscr{H}_Q} \abs{b_h}^2\left[\frac{\nu(h,h)^{-\tfrac{1}{2}}}{\mu(Q(h))^{\sigma}}\right]^2\\
&\leq C.
\end{align*}
Hence
\begin{equation*}
\abs{\sum_{h\in \mathscr{H}_Q}\proin{f}{h}\nu(h,h)^{\tfrac{1}{2}} b_h}\leq C \left(\iint_{Q\times Q} \frac{\abs{f(x)-f(y)}^2}{\delta(x,y)^{1+2\sigma}} d\mu(x) d\mu(y)\right)^{\tfrac{1}{2}},
\end{equation*}
so that
\begin{equation*}
\left(\sum_{h\in \mathscr{H}_Q}\abs{\proin{f}{h}}^2\nu(h,h)\right)^{\tfrac{1}{2}}\leq C\left(\iint_{Q\times Q} \frac{\abs{f(x)-f(y)}^2}{\delta(x,y)^{1+2\sigma}} d\mu(x) d\mu(y)\right)^{\tfrac{1}{2}}
\end{equation*}
or
\begin{equation*}
\left(\sum_{h\in \mathscr{H}_Q}\frac{\abs{\proin{f}{h}}^2}{\mu(Q(h))^{2\sigma}}\right)^{\tfrac{1}{2}}\leq C\left(\iint_{Q\times Q} \frac{\abs{f(x)-f(y)}^2}{\delta(x,y)^{1+2\sigma}} d\mu(x) d\mu(y)\right)^{\tfrac{1}{2}}
\end{equation*}
as desired.

\end{proof}

\section{Proof of Theorems \ref{thm:Theorem2Main} and \ref{thm:Theorem3Main}}
\label{sec:4}
Before start proving the main results, let us
mention that from the result of the previous section the Sobolev type condition
required in \cite[Theorem 4.1]{ActisAimar} is the same as our current hypothesis
$f\in B^\lambda_2(X,\delta,\mu)$ when $p=2$. Then for any function $f\in
B^\sigma_2(X,\delta,\mu)$ we can write
\[\Ds f(x)=\sum_{h \in \Ha} m_h\mu(Q(h))^{-\sigma} \proin{f}{h} h(x),\]
where the series converges in $L^2(X,\mu)$.

\begin{proof}[Proof of Theorem~\ref{thm:Theorem2Main}]
From Theorem~\ref{thm:characterizationBesov} we see that for each $t>0$, $u(t)$
defined in~\eqref{eq:seriessolucion} belongs to $B^{\lambda}_{2}(X,\delta,\mu)$, since $u_0\in B^{\lambda}_2(X,\delta,\mu)$. Moreover, for $t,s\geq 0$,
\begin{align*}
\norm{u(t)-u(s)}^2_{B^{\lambda}_{2}(X,\delta,\mu)}
&= \norm{\sum_{h\in\mathscr{H}} \left(e^{itm_h \mu({Q(h)})^{-\beta}}-
e^{i sm_h\mu({Q(h)})^{-\beta}}\right)
\proin{u_0}{h}h}^2_{B^{\lambda}_{2}(X,\delta,\mu)}\\
&= \sum_{h\in\mathscr{H}}\abs{e^{i tm_h \mu({Q(h)})^{-\beta}}-
e^{i sm_h \mu({Q(h)})^{-\beta}}}^2\abs{\proin{u_0}{h}}^2 \\
&\qquad+ \sum_{h\in\mathscr{H}} \abs{e^{i tm_h \mu({Q(h)})^{-\beta}}-
e^{i sm_h \mu({Q(h)})^{-\beta}}}^2
\frac{\abs{\proin{u_0}{h}}^2}{\mu({Q(h)})^{2\lambda}}.
\end{align*}
In order to see that both series above tend to zero as $s\to t$, for each
positive $\ep$ we can take a finite subfamily $\F$ of $\Ha$ such that
$\sum_{h\in\Ha\backslash\F} \abs{\proin{u_0}{h}}^2
\left(1+\mu(Q(h))^{-2\lambda}\right)<\ep$. For the sums on $\F$ we can argue
with the continuity of the complex exponential.

Let us prove that the formal derivative of $u(t)$ is actually the derivative in the sense of $B^{\lambda-\beta}_{2}(X,\delta,\mu)$. In fact, for $t>0$ and $\tau$ small
enough,
\begin{align*}
&\norm{\frac{u(t+\tau)-u(t)}{\tau}-
i\sum_{h\in\mathscr{H}}e^{itm_h\mu({Q(h)})^{-\beta}}\mu({Q(h)})^{-\beta}
\proin{u_0}{h}h}^2_{B^{\lambda-\beta}_{2}(X,\delta,\mu)}\\
&=\norm{\sum_{h\in\mathscr{H}}e^{itm_h\mu({Q(h)})^{-\beta}}
\left[\frac{e^{i\tau m_h\mu({Q(h)})^{-\beta}}-1}{\tau}-i\mu({Q(h)})^{-\beta}\right]
\proin{u_0}{h}h}^2_{B^{\lambda-\beta}_{2}(X,\delta,\mu)}\\
&\leq C \left\{\norm{\sum_{h\in\mathscr{H}}e^{itm_h\mu({Q(h)})^{-\beta}}
\left[\frac{e^{i\tau m_h\mu({Q(h)})^{-\beta}}-1}{\tau}-i\mu({Q(h)})^{-\beta}\right]
\proin{u_0}{h}h}^2_{L^2}\right.\\
&\hspace*{3truecm}
\left.+\sum_{h\in\mathscr{H}}\abs{\frac{e^{i\tau m_h\mu({Q(h)})^{-\beta}}-1}{\tau}
-i\mu({Q(h)})^{-\beta}}^2
\frac{\abs{\proin{u_0}{h}}^2}{\mu({Q(h)})^{2(\lambda-\beta)}}\right\}\\
&\leq C \sum_{h\in\mathscr{H}}\mu({Q(h)})^{2\beta}
\abs{\frac{e^{i\tau m_h\mu({Q(h)})^{-\beta}}-1}{\tau}-i\mu({Q(h)})^{-\beta}}^2
\frac{\abs{\proin{u_0}{h}}^2}{\mu({Q(h)})^{2\lambda}}.
\end{align*}
Since, from Theorem~\ref{thm:characterizationBesov},
$\sum_{h\in\mathscr{H}}\frac{\abs{\proin{u_0}{h}}^2}{\mu({Q(h)})^{2\lambda}}<\infty$
and \[\mu({Q(h)})^{2\beta}\left|\frac{e^{i\tau m_h\mu({Q(h)})^{-\beta}}-1}{\tau}
-i\mu({Q(h)})^{-\beta}\right|^2= \left|\frac{e^{i\tau  m_h\mu({Q(h)})^{-\beta}}
-1}{\mu({Q(h)})^{-\beta}\tau}-i\right|^2\]
tends to zero as $\tau\to 0$ for each $h\in\Ha$, arguing as before we obtain
the result.

On the other hand since $u(t)\in B^{\lambda}_{2}(X,\delta,\mu)$ and since
 $\lambda>\beta$, $D^{\beta}u(t)$ is well defined and it is given by
\begin{align*}
D^{\beta}u(t) &= D^{\beta}\left(\sum_{h\in\mathscr{H}}e^{itm_h\mu({Q(h)})^{-\beta}}
\proin{u_0}{h}h\right) \\
& = \sum_{h\in\mathscr{H}}e^{itm_h\mu({Q(h)})^{-\beta}}\mu({Q(h)})^{-\beta}
\proin{u_0}{h}h = -i \frac{d u}{d t}.
\end{align*}
Hence $u(t)$ is a solution of the nonlocal equation and \textit{(\ref{thm:Theorem2Main}.\ref{item:mainresultdos})} is proved.

\end{proof}

\bigskip

Before proving Theorem~\ref{thm:Theorem3Main} we shall obtain some basic maximal estimates involved in the proofs of \textit{(\ref{thm:Theorem3Main}.\ref{item:mainresulttres})} and \textit{(\ref{thm:Theorem3Main}.\ref{item:mainresultcuatro})}. With $M_{dy}$ we denote the Hardy-Littlewood dyadic maximal operator given by
\begin{equation*}
M_{dy}f(x)=\sup \frac{1}{\mu({Q})}\int_{Q}\abs{f(y)}d\mu(y)
\end{equation*}
where the supremum is taken on the family of all dyadic cubes $Q\in\D$ for which $x\in Q$. Calder\'on's dyadic maximal operator of order $\lambda$ is defined by
\begin{equation*}
M^{\#}_{\lambda,dy}f(x)=\sup\frac{1}{\mu({Q})^{1+\lambda}}\int_{Q}\abs{f(y)-f(x)}d\mu(y),
\end{equation*}
where the supremum is taken on the family of all dyadic cubes $Q$ of $X$  such that $x\in Q$. The following lemma can be seen as an extension of a result by DeVore and Sharpley in \cite[Corollary 11.6]{DeSa84}.

\begin{lemma}\label{lem:acotacionMaximalCalderon}
If $f\in B^{\lambda}_{2}(X,\delta,\mu)$, then $\norm{M^{\#}_{\lambda, dy}f}_{L^2}\leq \norm{f}_{B^{\lambda}_{2}}$.
\end{lemma}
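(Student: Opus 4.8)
The plan is to prove the stronger pointwise estimate
\[
M^{\#}_{\lambda,dy}f(x)^2 \le \int_X \frac{\abs{f(x)-f(y)}^2}{\delta(x,y)^{1+2\lambda}}\,d\mu(y)
\]
for every $x$, and then simply integrate in $x$. Granting this pointwise bound, Tonelli's theorem gives
\[
\norm{M^{\#}_{\lambda,dy}f}_{L^2}^2 \le \iint_{X\times X}\frac{\abs{f(x)-f(y)}^2}{\delta(x,y)^{1+2\lambda}}\,d\mu(x)\,d\mu(y),
\]
and the right-hand side is exactly the $B^{\lambda}_2(X,\delta,\mu)$ seminorm, hence is dominated by $\norm{f}^2_{B^{\lambda}_2}$. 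So the whole lemma reduces to the pointwise inequality.

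To obtain the pointwise bound I fix $x$ and a single dyadic cube $Q$ containing $x$, and estimate the corresponding average by the Cauchy--Schwarz inequality:
\[
\frac{1}{\mu(Q)^{1+\lambda}}\int_Q \abs{f(y)-f(x)}\,d\mu(y)
\le \left(\frac{1}{\mu(Q)^{1+2\lambda}}\int_Q \abs{f(y)-f(x)}^2\,d\mu(y)\right)^{1/2},
\]
where the factor $\mu(Q)^{1/2}$ produced by Cauchy--Schwarz combines with $\mu(Q)^{-(1+\lambda)}$ to leave the weight $\mu(Q)^{-(1+2\lambda)}$ inside the square root.

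The key geometric observation is that whenever $y\in Q$ the cube $Q$ contains both $x$ and $y$, so by the very definition of $\delta$ we have $\delta(x,y)\le\mu(Q)$, and therefore $\mu(Q)^{-(1+2\lambda)}\le\delta(x,y)^{-(1+2\lambda)}$. Replacing the constant weight $\mu(Q)^{-(1+2\lambda)}$ by the larger kernel $\delta(x,y)^{-(1+2\lambda)}$ and enlarging the domain from $Q$ to $X$ yields
\[
\frac{1}{\mu(Q)^{1+\lambda}}\int_Q \abs{f(y)-f(x)}\,d\mu(y)
\le \left(\int_X \frac{\abs{f(x)-f(y)}^2}{\delta(x,y)^{1+2\lambda}}\,d\mu(y)\right)^{1/2}.
\]

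The crucial feature of this bound is that its right-hand side no longer depends on $Q$; this is precisely what lets the supremum over dyadic cubes $Q\ni x$ pass through harmlessly and yields the claimed pointwise estimate. The step I expect to require the most care is exactly this uniformity: one must verify that Cauchy--Schwarz together with $\delta(x,y)\le\mu(Q)$ absorbs the entire $Q$-dependence \emph{before} the supremum is taken, rather than producing a bound that still varies with $Q$. Beyond that there is no genuine obstacle; taking the supremum in $Q$, squaring, integrating in $x$, and invoking Tonelli to interchange the order of integration completes the argument.
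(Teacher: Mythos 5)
Your proof is correct and follows essentially the same route as the paper's: Cauchy--Schwarz on the average over $Q$, then the observation that $\delta(x,y)\leq\mu(Q)$ for $x,y\in Q$ to replace $\mu(Q)^{-(1+2\lambda)}$ by the kernel $\delta(x,y)^{-(1+2\lambda)}$, yielding a bound independent of $Q$ that survives the supremum and integrates to the Besov seminorm. The paper states only the pointwise chain of inequalities and leaves the final integration implicit; you have simply made that last step explicit.
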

\begin{proof}
Let $Q\in\D$ and $x\in Q$ be given.
Applying Schwarz's inequality, since $\delta(x,y)\leq \mu(Q)$ for $y\in Q$, we have
\begin{align*}
\frac{1}{\mu(Q)^{1+\lambda}}\int_{Q}\abs{f(y)-f(x)}d\mu(y) &
\leq \frac{1}{\mu(Q)^{1+\lambda}}\left(\int_{Q}\abs{f(y)-f(x)}^2 d\mu(y)\right)^{\tfrac{1}{2}}\mu(Q)^{\tfrac{1}{2}}\\
& \leq \left(\frac{1}{\mu(Q)^{1+2\lambda}}\int_{Q}\abs{f(y)-f(x)}^2 d\mu(y)\right)^{\tfrac{1}{2}}\\
& \leq \left(\int_{X}\frac{\abs{f(y)-f(x)}^2}{\delta(x,y)^{1+2\lambda}} d\mu(y)\right)^{\tfrac{1}{2}}.
\end{align*}
\end{proof}

In this section we shall need a better description of the structure of $\mathscr{H}$ in terms of scales. As we said before $\D^j$ denotes the dyadic sets at the scale $j\in \mathbb{Z}$. With $\mathscr{H}^j$ we denote the Haar wavelets which are based in the cubes of $\D^j$ for $j$ a fixed integer. Since from our construction of the Haar system we have that
\begin{equation*}
V_{j+1}=V_j\oplus W_j,
\end{equation*}
then $\mathscr{H}^j$ is an orthonormal basis for $W_j$. Hence, for $N>0$, $V_{N+1}=V_0\oplus W_0\oplus \ldots \oplus W_N$. So that, for $P_0 f=0$, the projection of $f$ onto $V_{N+1}$ is given by
\begin{equation*}
P_{N+1}f=\sum_{j=0}^N \mathcal{Q}_j f,
\end{equation*}
where $\mathcal{Q}_j$ is the projector onto $W_j$, precisely
\begin{equation*}
\mathcal{Q}_j f=\sum_{h\in \mathscr{H}^j}\proin{f}{h} h.
\end{equation*}
As it is easy to see, since $V_{N+1}$ is the space of those $L^2$ functions on $X$ which are constant on each cube of $\D^{N+1}$, $\abs{P_{N+1}f(x)}\leq M_{dy}f(x)$, pointwise.

Let us next introduce two maximal operators related to the series \eqref{eq:seriessolucion}. For fixed $t>0$ set
\begin{equation*}
S_{t}^{\displaystyle *}f(x)=\sup_{N\in \mathbb{N}}\abs{S^{N}_{t}f(x)}
\end{equation*}
with
\begin{equation*}
S^{N}_{t}f(x)=\sum_{j=0}^{N}\sum_{h\in\mathscr{H}^j}e^{-i t m_h\mu(Q(h))^{-\beta}}\proin{f}{h}h(x).
\end{equation*}
Set
\begin{equation*}
S^{\displaystyle *} f(x)=\sup_{0<t<1}S_{t}^{\displaystyle *}f(x).
\end{equation*}

\begin{lemma}\label{lem:estimacionesdosmaximales}
Let $f\in B^{\lambda}_2(X,\delta,\mu)$ with $0<\beta<\lambda<1$ and $P_0f=0$. Then, the inequalities
\begin{enumerate}[(\ref{lem:estimacionesdosmaximales}.a)]
\item $S_{t}^{\displaystyle *}f(x)\leq Ct M^{\#}_{\lambda, dy}f(x) + 2 M_{dy}f(x)$
for $t\geq 0$ and $x\in X$;\label{item:estimacionesdosmaximalesuno}

\item $S^{\displaystyle *}f(x)\leq C M^{\#}_{\lambda, dy}f(x) + 2 M_{dy}f(x)$ for $x\in X$;\label{item:estimacionesdosmaximalesdos}

\item $\norm{S^{\displaystyle *}f}_{L^2}\leq C\norm{f}_{B^{\lambda}_2 (X,\delta,\mu)}$,\label{item:estimacionesdosmaximalestres}
\end{enumerate}
hold for some constant $C$ which does not depend on $f$.
\end{lemma}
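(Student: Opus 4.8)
The plan is to fix $x\in X$ and first rewrite the partial sums scale by scale. By Theorem~\ref{thm:eingefunctions_of_Dbeta} the constant $m_h$ depends only on the cube $Q(h)$; denote by $Q_j(x)$ the unique cube of $\D^j$ containing $x$ and put $\theta_j(x)=m_{Q_j(x)}\mu(Q_j(x))^{-\beta}$. Every $h\in\mathscr{H}^j$ with $h(x)\neq0$ is based on $Q_j(x)$, so all of them carry the common phase $e^{-it\theta_j(x)}$ and the inner sum over $\mathscr{H}^j$ collapses to $\mathcal{Q}_j f(x)$. Thus
\begin{equation*}
S^{N}_{t}f(x)=\sum_{j=0}^{N}e^{-it\theta_j(x)}\,\mathcal{Q}_j f(x),\qquad \mathcal{Q}_j f(x)=P_{j+1}f(x)-P_{j}f(x).
\end{equation*}
Since $P_0 f=0$ we have $P_{N+1}f(x)=\sum_{j=0}^{N}\mathcal{Q}_j f(x)$, and therefore
\begin{equation*}
S^{N}_{t}f(x)=P_{N+1}f(x)+\sum_{j=0}^{N}\bigl(e^{-it\theta_j(x)}-1\bigr)\mathcal{Q}_j f(x).
\end{equation*}

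To prove (\ref{lem:estimacionesdosmaximales}.\ref{item:estimacionesdosmaximalesuno}) I would estimate the two summands. The first is controlled pointwise by $\abs{P_{N+1}f(x)}\leq M_{dy}f(x)$. For the correction sum I use $\abs{e^{-it\theta_j(x)}-1}\leq t\,\theta_j(x)\leq M_2\,t\,\mu(Q_j(x))^{-\beta}$ (from \eqref{eq:cotas_m_Q}) together with the pointwise bound $\abs{P_k f(x)-f(x)}\leq \mu(Q_k(x))^{\lambda}M^{\#}_{\lambda,dy}f(x)$, which is immediate from the definition of $M^{\#}_{\lambda,dy}$. For $j\geq1$ this gives $\abs{\mathcal{Q}_j f(x)}\leq 2\mu(Q_j(x))^{\lambda}M^{\#}_{\lambda,dy}f(x)$, whence the $j\geq1$ terms are bounded by
\begin{equation*}
2M_2\,t\,M^{\#}_{\lambda,dy}f(x)\sum_{j\geq1}\mu(Q_j(x))^{\lambda-\beta}.
\end{equation*}
The series converges with a bound independent of $x$ because $\lambda>\beta$ and the measures $\mu(Q_j(x))$ decay geometrically along the chain of dyadic cubes containing $x$; this is precisely the one dimensional character of $(X,\delta,\mu)$ behind Lemma~\ref{lem:integrals_of_delta}. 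The remaining term $j=0$, in which $\mathcal{Q}_0 f(x)=P_1 f(x)$, I split as $\bigl(e^{-it\theta_0(x)}-1\bigr)\bigl(P_1f(x)-f(x)\bigr)+\bigl(e^{-it\theta_0(x)}-1\bigr)f(x)$: the first piece is again $\leq CtM^{\#}_{\lambda,dy}f(x)$, while the second is $\leq 2\abs{f(x)}\leq 2M_{dy}f(x)$ for a.e.\ $x$. Collecting the estimates and taking the supremum over $N$ yields (\ref{lem:estimacionesdosmaximales}.\ref{item:estimacionesdosmaximalesuno}).

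Statement (\ref{lem:estimacionesdosmaximales}.\ref{item:estimacionesdosmaximalesdos}) is then immediate: for $0<t<1$ the bound in (\ref{lem:estimacionesdosmaximales}.\ref{item:estimacionesdosmaximalesuno}) has right-hand side $\leq CM^{\#}_{\lambda,dy}f(x)+2M_{dy}f(x)$ independent of $t$, so taking the supremum over $t\in(0,1)$ gives the claim. For (\ref{lem:estimacionesdosmaximales}.\ref{item:estimacionesdosmaximalestres}) I take $L^2$ norms in (\ref{lem:estimacionesdosmaximales}.\ref{item:estimacionesdosmaximalesdos}). By Lemma~\ref{lem:acotacionMaximalCalderon} one has $\norm{M^{\#}_{\lambda,dy}f}_{L^2}\leq\norm{f}_{B^{\lambda}_2}$, and the dyadic Hardy--Littlewood maximal operator is bounded on $L^2(X,\mu)$, so $\norm{M_{dy}f}_{L^2}\leq C\norm{f}_{L^2}\leq C\norm{f}_{B^{\lambda}_2}$ by Theorem~\ref{thm:characterizationBesov}; adding these gives $\norm{S^{*}f}_{L^2}\leq C\norm{f}_{B^{\lambda}_2}$.

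The crux of the argument is the first reduction: the fact that $m_h$ depends only on $Q(h)$ is what lets the phases consolidate at each scale and turns the double Haar sum into the single scale sum $\sum_j e^{-it\theta_j(x)}\mathcal{Q}_j f(x)$, after which $S^{N}_{t}f$ can be compared with the martingale averages $P_{N+1}f$. The main technical point I expect to need care is the uniform (in $x$) convergence of $\sum_j\mu(Q_j(x))^{\lambda-\beta}$, i.e.\ the geometric decay of the cube measures, and the bookkeeping of the boundary term $P_{N+1}f$ and of the scale $j=0$ so that the factor $t$ is attached only to $M^{\#}_{\lambda,dy}f$ while $M_{dy}f$ enters with an absolute constant.
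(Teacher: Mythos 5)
Your proof is correct and follows the same skeleton as the paper's: both split $\abs{S_t^Nf(x)}\leq\abs{S_t^Nf(x)-S_0^Nf(x)}+\abs{S_0^Nf(x)}$, identify $S_0^Nf$ with the martingale average $P_{N+1}f$ (bounded by $M_{dy}f$), and control the phase-difference term by $C\,t\,M^{\#}_{\lambda,dy}f(x)$ using $\abs{e^{i\theta}-1}\leq\abs{\theta}$, the bounds \eqref{eq:cotas_m_Q}, and the uniform summability of $\sum_{j}\mu(Q_j(x))^{\lambda-\beta}$ --- a point which, for the record, the paper asserts with exactly as little justification as you do. The one real difference is local: where you collapse each scale to the martingale difference $\mathcal{Q}_jf(x)=P_{j+1}f(x)-P_jf(x)$, which requires your reading that $m_h$ depends on $h$ only through $Q(h)$ so that all wavelets based on $Q_j(x)$ share a single phase, the paper keeps the individual Haar terms and uses the cancellation $\proin{f}{h}=\int_{Q(h)}\bigl(f(y)-f(x)\bigr)h(y)\,d\mu(y)$ together with the uniformly bounded number of wavelets per cube. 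The paper's variant is slightly more robust (it would survive even if $m_h$ varied among the wavelets on a fixed cube), but your reading of Theorem~\ref{thm:eingefunctions_of_Dbeta} is the intended one (cf.\ the Sierpinski example, where $m_h=m_\beta$ for every $h$), so the collapse is legitimate. Finally, your special treatment of $j=0$ is unnecessary: $P_0f=0$ says precisely that the average of $f$ over each cube of $\D^0$ vanishes, so $\abs{\mathcal{Q}_0 f(x)}=\abs{P_1f(x)-P_0f(x)}\leq 2\mu(Q_0(x))^{\lambda}M^{\#}_{\lambda,dy}f(x)$ exactly as for $j\geq 1$; as written, your detour through $\abs{f(x)}\leq M_{dy}f(x)$ proves (\ref{lem:estimacionesdosmaximales}.\ref{item:estimacionesdosmaximalesuno}) with constant $3$ in front of $M_{dy}f$ and only for a.e.\ $x$ rather than all $x$, which is weaker than stated but still entirely sufficient for parts (\ref{lem:estimacionesdosmaximales}.\ref{item:estimacionesdosmaximalesdos}), (\ref{lem:estimacionesdosmaximales}.\ref{item:estimacionesdosmaximalestres}) and for Theorem~\ref{thm:Theorem3Main}.
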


\begin{proof}
For $f\in B^{\lambda}_2 (X,\delta,\mu)$, $t\geq 0$ and $N\in \mathbb{N}$, we have
\begin{equation}\label{eq:sumaryrestar}
\abs{S_{t}^{N}f(x)}\leq\abs{S_{t}^{N}f(x)-S_{0}^{N}f(x)}+\abs{S_{0}^{N}f(x)}.
\end{equation}
Since $S_{0}^{N}f(x)=P_Nf(x)$, we have $\sup_N \abs{S_{0}^{N}f(x)}\leq  M_{dy}f(x)$.
Let us now estimate the first term on the right hand side of \eqref{eq:sumaryrestar}.
Let $Q(j,x)$ be the only cube $Q$ in $\D^j$ for which $x\in Q$.
Let $\mathscr{H}(j,x)$ be the set of all the wavelets based on $Q(j,x)$.
Recall that $\#(\mathscr{H}(j,x))$ is bounded by a purely geometric constant. Then
\begin{align*}
&\abs{S_{t}^{N}f(x)-S_{0}^{N}f(x)}\\
&= \abs{\sum_{j=0}^{N}\sum_{h\in\mathscr{H}(j,x)}\left[e^{-i t m_h\mu(Q(j,x))^{-\beta}}-1\right]\proin{f}{h}h(x)}\\
&= \abs{\sum_{j=0}^{N}\sum_{h\in\mathscr{H}(j,x)}\left[e^{-i t m_h\mu(Q(j,x))^{-\beta}}-1\right] \int_{Q(j,x)}(f(y)-f(x))h(y)
d\mu(y)\, h(x)}\\
&= C\, t \sum_{j=0}^{N}\sum_{h\in\mathscr{H}(j,x)}\frac{\abs{e^{-i t m_h\mu(Q(j,x))^{-\beta}}-1}}{m_h t \mu(Q(j,x))^{-\lambda}}
\frac{1}{\mu(Q(j,x))^{1+\lambda}}\int_{Q(j,x)}\abs{f(y)-f(x)}d\mu(y)\\
&\leq C\, t \sum_{j=0}^{N}\mu(Q(j,x))^{\lambda-\beta} M^{\#}_{\lambda, dy}f(x)\\
&\leq \widetilde{C}\, t M^{\#}_{\lambda, dy}f(x)
\end{align*}
and
\textit{(\ref{lem:estimacionesdosmaximales}.\ref{item:estimacionesdosmaximalesuno})}
is proved. Since $t<1$,
\textit{(\ref{lem:estimacionesdosmaximales}.\ref{item:estimacionesdosmaximalesdos})}
follows. And applying Lemma~\ref{lem:acotacionMaximalCalderon} we get
\textit{(\ref{lem:estimacionesdosmaximales}.\ref{item:estimacionesdosmaximalestres})}.
\end{proof}

\begin{proof}[Proof of Theorem~\ref{thm:Theorem3Main}]
Since for each $Q\in\D$ we have that $\mathcal{X}_Q$ belongs to $Lip(X,\delta)$,
in fact $\abs{\mathcal{X}_Q (x)-\mathcal{X}_Q (y)}\leq\frac{\delta (x,y)}{\mu(Q)}$,
then $\mathcal{S}(\mathscr{H})$ is a dense subspace of $L^2(X,\mu)$ such that
$S^N_t g(x)$ converges as $N\to \infty$ for every $x\in X$ and every $t\geq 0$.
From lemmas~\ref{lem:acotacionMaximalCalderon}, \ref{lem:estimacionesdosmaximales},
and the above remark the result of Theorem~\ref{thm:Theorem3Main} follows the
standard argument of pointwise convergence (see \cite{AiBoGo13}).
\end{proof}

Let us finally illustrate the above result in the Sierpinski triangle.
Set $S$ to denote the Sierpinski triangle in $\mathbb{R}^2$ equipped with the
normalized Hausdorff measure $\mathcal H_s$ of order $s=\tfrac{\log 3}{\log 2}$. For each positive integer $j$, the set $S$ can be written as the union
of $3^j$ translations of the contraction of $S$ by a factor $3^{-j}$, with
respect to any
one of the three vertices of the convex hull of $S$. Except for sets of $\mathcal H_s$
measure zero this covering of $S$ is disjoint. Each one of these disjoint
pieces is denoted by $T^j_k$, $k=1,\ldots,3^j$. We shall also write $T^0_1$ to denote $S$.
Set $\D^j$ to denote the family
$\{T^j_k: k=1,\ldots,3^j\}$ and $\D=\cup_{j\geq 0}\D^j$.
Notice that $\mathcal H_s(T^j_k)=3^{-j}$.

For each $j$ and each $k=1,\ldots,3^j$ we have that $T^j_k$ contains and is covered
by exactly three pieces $T^{j+1}_{k_1}$, $T^{j+1}_{k_2}$, $T^{j+1}_{k_3}$ of the
generation $j+1$. In order to abbreviate the notation, given $T\in\D$ we
write $T(1)$, $T(2)$ and $T(3)$ to denote these pieces of $T$. The three dimensional
space of all functions defined in $T$ which are constant on $T(1)$, $T(2)$ and
$T(3)$ has as a basis
$\set{\mathcal{X}_{T},\mathcal{X}_{T(1)},\mathcal{X}_{T(2)}}$. By
orthonormalization of this basis with the inner product of $L^2(S,d\mathcal H_s)$ keeping
$3^{j/2}\mathcal{X}_{T}$, $T\in\D^j$, we get two other functions $h^1_T$ and $h^2_T$ such that
$\set{3^{j/2}\mathcal{X}_{T},h^1_T,h^2_T}$
is an orthonormal basis for that 3-dimensional space. From the self similarity of our setting, we can take
$h^1_T$ and $h^2_T$ as the corresponding scalings and translations of those
associated to $T^0_1$. If
\begin{align*}
h^1_{T^0_1}(x)& = \sqrt{\tfrac{3}{2}}\left(\mathcal{X}_{T(1)}-\mathcal{X}_{T(2)}\right), \\
h^2_{T^0_1}(x) &= \tfrac{1}{\sqrt{2}}
\left(\mathcal{X}_{T(1)}+\mathcal{X}_{T(2)}-2\mathcal{X}_{T(3)}\right),
\end{align*}
then
\begin{equation*}
h^1_{T^j_k}(x)= 3^{\tfrac{j}{2}}\sqrt{\tfrac{3}{2}}\left(\mathcal{X}_{T^j_k(1)}-\mathcal{X}_{T^j_k(2)}\right)
\end{equation*}
and
\begin{equation*}
h^2_{T^j_k}(x)= 3^{\tfrac{j}{2}}\tfrac{1}{\sqrt{2}}\left(\mathcal{X}_{T^j_k(1)}+\mathcal{X}_{T^j_k(2)}-2\mathcal{X}_{T^j_k(3)}\right).
\end{equation*}

The system $\mathscr{H}$ of all these functions $h$ is an orthonormal basis for the subspace of $L^2(S,d\mathcal H_s)$ of those functions
with vanishing mean. Or, equivalently, the system $\mathscr{H}\cup \set{1}$ is an orthonormal basis for $L^2(S,d\mathcal H_s)$.

For a given $h$ in $\mathscr{H}$ we shall use $T(h)$ to denote the triangle
in which $h$ is based, i.e. $h=h^i_T$ for $i=1$ or $i=2$. For each $T\in\D$ there are exactly two wavelets $h\in\mathscr{H}$ based on $T$.

The function $\delta(x,y)=\min\{\mathcal H_s(T): T\in\D \textrm{ such that } x,y\in T\}$ is
a distance on $S$ provided we agree at defining $\delta(x,x)=0$.

Set $j(h)$ to denote the integer $j$ such that $T(h) \in \D^j$.
From Lemma~\ref{thm:eingefunctions_of_Dbeta} for $0<\beta<1$ we
have that each $h\in\mathscr{H}$ is an eigenfunction with eigenvalue
$m_\beta 3^{j(h)\beta}$ of the dyadic fractional differential operator
\begin{equation*}
D^\beta g(x)=\int_S\frac{g(x)-g(y)}{\delta(x,y)^{1+\beta}} d\mathcal H_s(y),
\end{equation*}
where the constant $m_\beta= 1+\frac{1}{2}\frac{1}{3^\beta-1}$ is independent
of $h$.

Our main results throughout this paper applied to the particular setting $S$ read as follows:
\begin{corollary}
Let $f$ be an $L^2(S,d\mathcal H_s)$ function with vanishing mean such that
\begin{equation*}
\iint_{S\times S}\frac{\abs{f(x)-f(y)}^2}{\delta(x,y)^{1+2\lambda}} d\mathcal H_s(x) d\mathcal H_s(y)<\infty
\end{equation*}
for some $\lambda>\beta>0$. Then, the wave function
\begin{equation*}
u(x,t)=\sum_{h\in\mathscr{H}}e^{-itm_\beta 3^{j(h)\beta}}\proin{f}{h} h(x),
\end{equation*}
solves the Schr\"{o}dinger type problem
\begin{equation*}
\begin{dcases}
i \frac{\partial u}{\partial t} = D^{\beta}u,  & \mbox{$x\in S$, $t>0$}\\
\medskip
\lim_{t\to 0^+}u(x,t)=f(x),  & \mbox{for $\mathcal H_s$-almost every $x\in S$.}
\end{dcases}
\end{equation*}
\end{corollary}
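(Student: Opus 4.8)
The plan is to read the corollary as the direct specialization of Theorems~\ref{thm:Theorem2Main} and~\ref{thm:Theorem3Main} to the concrete space $(S,d,\mathcal{H}_s,\D,\mathscr{H})$ just described, so that the only genuine work is to check that this setting satisfies the abstract hypotheses and then to translate the notation. First I would confirm that $(S,d,\mathcal{H}_s)$ is a space of homogeneous type, that the families $\D^j=\{T^j_k\}$ form a Christ-type dyadic system whose unique top cube is $T^0_1=S$ (so that $S$ is a quadrant: any two triangles share the common ancestor $S$), and that $\mathscr{H}$, obtained by orthonormalizing $\set{\mathcal{X}_{T},\mathcal{X}_{T(1)},\mathcal{X}_{T(2)}}$ on each $T$, is precisely the Haar basis for the mean-zero subspace of $L^2(S,d\mathcal{H}_s)$. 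With $\delta$ defined as above, these are exactly the objects for which the abstract results were established.

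Next I would match the quantities entering \eqref{eq:seriessolucion} to their concrete values. Since $\mathcal{H}_s(T^j_k)=3^{-j}$, for $h$ based on $T(h)\in\D^{j(h)}$ we have $\mu(Q(h))=3^{-j(h)}$, hence $\mu(Q(h))^{-\beta}=3^{j(h)\beta}$. By Theorem~\ref{thm:eingefunctions_of_Dbeta} each $h$ is an eigenfunction of $D^\beta$, and the self-similarity forces the eigenvalue constant to be the single number $m_h=m_\beta=1+\tfrac12(3^\beta-1)^{-1}$, independent of $h$; in particular the two-sided bound \eqref{eq:cotas_m_Q} holds trivially with $M_1=M_2=m_\beta$. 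Thus $m_h\,\mu(Q(h))^{-\beta}=m_\beta 3^{j(h)\beta}$, and the series defining $u$ in the corollary is the series \eqref{eq:seriessolucion} for this space. Finally, the hypothesis on $f$ is, by definition, membership in $B^{\lambda}_2(S,\delta,\mathcal{H}_s)$, while the vanishing-mean condition is exactly $P_0f=0$, because here $V_0$ consists of the functions constant on $\D^0=\{S\}$, i.e.\ the constants.

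With these identifications the conclusion is immediate. Part~(\ref{thm:Theorem2Main}.\ref{item:mainresultdos}) of Theorem~\ref{thm:Theorem2Main} gives that $u(\cdot,t)$ solves $i\,\partial_t u=D^\beta u$ for $t>0$ (with $\partial_t u$ understood as the Fr\'echet derivative with values in $B^{\lambda-\beta}_2(S,\delta,\mathcal{H}_s)$), and part~(\ref{thm:Theorem3Main}.\ref{item:mainresultcuatro}) of Theorem~\ref{thm:Theorem3Main} yields the pointwise limit $u(x,t)\to f(x)$ for $\mathcal{H}_s$-almost every $x\in S$ as $t\to0^+$, which is precisely the initial condition asserted. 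I do not expect any genuine analytic obstacle here: the entire difficulty has already been absorbed into the maximal estimates of Lemma~\ref{lem:estimacionesdosmaximales}, the bound of Lemma~\ref{lem:acotacionMaximalCalderon}, and the Besov characterization of Theorem~\ref{thm:characterizationBesov}. The only point demanding care is the verification of the framework---chiefly that the explicitly constructed $\mathscr{H}$ above coincides with the abstract Haar system and that $S$ genuinely plays the role of a single quadrant---after which the corollary is a transcription of the two main theorems.
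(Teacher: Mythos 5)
Your proposal is correct and follows exactly the paper's route: the paper offers no separate proof of the corollary, presenting it as the direct specialization of Theorems~\ref{thm:Theorem2Main} and~\ref{thm:Theorem3Main} to the Sierpinski setting, with precisely the identifications you make ($\mu(Q(h))=3^{-j(h)}$, $m_h\equiv m_\beta$, vanishing mean $=P_0f=0$, and the integral hypothesis $=$ membership in $B^{\lambda}_2(S,\delta,\mathcal{H}_s)$). Your verification of the framework (quadrant property, Haar system, eigenvalue computation) is exactly the content the paper supplies before stating the corollary, so nothing is missing.
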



\def\cprime{$'$}
\providecommand{\bysame}{\leavevmode\hbox to3em{\hrulefill}\thinspace}
\providecommand{\MR}{\relax\ifhmode\unskip\space\fi MR }
\providecommand{\MRhref}[2]{%
  \href{http://www.ams.org/mathscinet-getitem?mr=#1}{#2}
}
\providecommand{\href}[2]{#2}


\bigskip

\bigskip

Instituto de Matem\'atica Aplicada del Litoral

CCT CONICET Santa Fe, Predio ``Dr. Alberto Cassano''

Colectora Ruta Nac. 168 km 0, Paraje El Pozo

Santa Fe, Argentina.

\bigskip

Marcelo Actis (\verb"mactis@santafe-conicet.gov.ar");

Hugo Aimar (\verb"haimar@santafe-conicet.gov.ar");

Bruno Bongioanni (\verb"bbongio@santafe-conicet.gov.ar");

Ivana G\'{o}mez (\verb"ivanagomez@santafe-conicet.gov.ar").

\end{document}